\theoremstyle{plain}
\newtheorem{thm}{Theorem}[section]
\newtheorem*{main}{Main Theorem}
\newtheorem{lem}[thm]{Lemma}%[section]
\newtheorem{prop}[thm]{Proposition}%[section]
\newtheorem{cor}[thm]{Corollary}%[section]  
\theoremstyle{definition}
\newtheorem{defn}[thm]{Definition}%[section]   
\theoremstyle{remark}
\newtheorem{rem}[thm]{Remark}%[section]
\numberwithin{equation}{section}
\newcommand{\wt}{\widetilde}
\newcommand{\wh}{\widehat}
\newcommand{\R}{\mathbb{R}}
\newcommand{\Z}{\mathbb{Z}}
\newcommand{\Or}{\mathcal{O}}
\newcommand{\restrictionmap}[2]{{#1}\mathpunct\restriction\hbox{}_{#2}}
\providecommand{\abs}[1]{\left\lvert#1\right\rvert}
\DeclareMathOperator{\Map}{Map}
\DeclareMathOperator{\Loc}{Loc}
\DeclareMathOperator{\id}{Id}
\DeclareMathOperator{\Iso}{Iso}
\DeclareMathOperator{\cl}{cl}
\DeclareMathOperator{\bd}{bd}
\DeclareMathOperator{\I}{I}
\title{The Hopf type theorem for equivariant gradient local maps} 
\author[P. Bart{\l}omiejczyk and P. Nowak-Przygodzki]
{Piotr Bart{\l}omiejczyk and Piotr Nowak-Przygodzki}
\address{Faculty of Applied Physics and Mathematics,
Gda{\'n}sk University of Technology,
Gabriela Narutowicza 11/12,
80-233 Gda{\'{n}}sk, Poland}
\email{pbartlomiejczyk@mif.pg.gda.pl, piotrnp@wp.pl}
\thanks{The second author was supported by 
Polish Research Grants NCN 2011/03/B/ST1/04533
and 2014/15/B/ST1/01710.}
\date{\today}
\subjclass[2010]{Primary: 55P91; Secondary: 54C35}
\keywords{Topological degree, Hopf property, equivariant gradient map, otopy.}
\begin{document}

\begin{abstract}
We construct a degree-type otopy invariant for
equivariant gradient local maps in the case
of a real finite dimensional orthogonal
representation of a compact Lie group.
We prove that the invariant establishes
a bijection between the set of
equivariant gradient otopy classes
and the direct sum of 
countably many copies of $\mathbb{Z}$.
\end{abstract}

\maketitle

%%%%%%%%%%%%%%%%%%%%%%%%%%%%%   Sec. Intro  %%%%%%%%%%%%%%%%%%%%%%%%%%%%
\section*{Introduction} 
\label{sec:intro}

In the 1920s H. Hopf discovered the homotopy classification
of maps from $n$-dimensional closed oriented manifold 
into the $n$-sphere.
Namely, he proved that the topological degree of the map
determines its homotopy class. More precisely, there is a bijection
from the set of homotopy classes of such maps 
to the integers.

In 1985, to obtain new bifurcation results, E.\ N.\ Dancer (\cite{D})
introduced a new degree-type invariant for $S^1$-equivariant
gradient maps, which provides more information than
the usual equivariant degree. Interestingly, five years later
A.\ Parusi\'nski (\cite{P}) showed that in the absence of group action
such an extra homotopy invariant for gradient maps does not exist.
The idea of the more subtle invariant in the equivariant gradient case
was developed later by S.\ Rybicki and his collaborators.
Many applications of this construction can be found in
\cite{GR,MR,R}.

In this paper we present the Hopf type classification
of the set of otopy classes of equivariant gradient local maps
$\mathcal{F}^\nabla_G[V]$ in the case of
a real finite dimensional orthogonal
representation $V$ of a compact Lie group $G$.
More precisely, first we show a decomposition 
of $\mathcal{F}^\nabla_G[V]$
into factors indexed by orbit types appearing in $V$
and then we give a description of each factor
as the direct sum of a countable number of $\Z$.
It should be emphasized that our result explains the phenomenon
discovered by Dancer. Namely, in the decomposition 
of $\mathcal{F}^\nabla_G[V]$ appear additional factors,
which do not occur in the decomposition 
of the set of otopy classes of equivariant local maps 
$\mathcal{F}_G[V]$. Thus we obtain essentially stronger
otopy invariant in the gradient case.
Recall that otopy is a generalization of the concept
of homotopy introduced by Becker and Gottlieb (\cite{BG1,BG2})
and independently by Dancer, G\k{e}ba and Rybicki (\cite{DGR}).

This study is the natural continuation of our previous work.
Earlier we investigated the classification of otopy classes
in cases of gradient local maps in $\R^n$ (\cite{BP1,BP2}),
gradient local fields on manifolds (\cite{BP4}) and 
equivariant local maps on a representation of 
a compact Lie group (\cite{B1,B2}).
It is worth pointing out that the ideas presented here
were inspired by \cite{B,BK,BKS,BKu,GKW,MP1,MP2}. 
Moreover, our paper develops and clarifies the material contained in \cite{BGI,GI}. 

The arrangement of the paper is as follows. 
Section \ref{sec:prel} contains some preliminaries.
In Sections \ref{sec:phi} and \ref{sec:psi}
we present constructions of the functions
$\Phi$ and $\Psi$, which are essential
to define the invariant $\Theta$.
Section \ref{sec:theta} provides the description of 
the formula for $\Theta$. In Sections \ref{sec:hnormal}
and \ref{sec:onormal} we introduce the notions of
$(H)$-normal and orbit-normal maps.
Our two main results are stated in Section \ref{sec:main},
where also the proof of the second is presented.
In turn, the first result is proved in Section~\ref{sec:proof}.
In Section \ref{sec:parusinski} we show
the Parusi\'nski type theorem,
which establishes the relation between the sets of equivariant
and equivariant gradient otopy classes.
Finally, Section \ref{sec:parametrized} contains
some remarks concerning the parametrized case.

%%%%%%%%%%%%%%%%%%%%%%%%%%%%%   Sec. Prel  %%%%%%%%%%%%%%%%%%%%%%%%%%%%%
\section{Preliminaries} 
\label{sec:prel}

The notation $A\Subset B$ means 
that $A$ is a compact subset of $B$.
For a topological space $X$,
we denote by $\tau(X)$ the topology on $X$.
For any topological spaces $X$ and $Y$, 
let $\mathcal{M}(X,Y)$ be the set
of all continuous maps $f\colon D_f\to Y$ such that 
$D_f$ is an open subset of $X$. 
Let $\mathcal{R}$ be
a family of subsets of $Y$. We define 
\[
\Loc(X,Y,\mathcal{R}):=\{\,f\in\mathcal{M}(X,Y)\mid 
f^{-1}(R)\Subset D_f \text{ for all $R\in\mathcal{R}$}\,\}.
\]
We introduce a topology in $\Loc(X,Y,\mathcal{R})$
generated by the subbasis consisting of all sets of the form
\begin{itemize}
	\item $H(C,U):=\{\,f\in\Loc(X,Y,\mathcal{R})\mid 
	      C\subset D_f,\, f(C)\subset U\,\}$
	      for $C\Subset X$ and $U\in\tau(Y)$,
	\item $M(V,R):=\{\,f\in\Loc(X,Y,\mathcal{R})\mid 
	      f^{-1}(R)\subset V\,\}$ for $V\in\tau(X)$ and 
	      $R\in\mathcal{R}$.
\end{itemize}
Elements of $\Loc(X,Y,\mathcal{R})$ 
are called \emph{local maps}.
The natural base point of $\Loc(X,Y,\mathcal{R})$ 
is the empty map.
Let $\sqcup$ denote the union of two disjoint local maps.
Moreover, in the case when $\mathcal{R}=\{\{y\}\}$
we will write $\Loc(X,Y,y)$ omitting double curly brackets.

Assume that $V$ is a real finite dimensional orthogonal
representation of a compact Lie group $G$.
Let $X$ be an arbitrary $G$-space.
We say that $f\colon X\to V$ is \emph{equivariant},
if $f(gx)=gf(x)$ for all $x\in X$ and $g\in G$.
We will denote by 
$
\mathcal{F}_G(X)
$
the space $\{f\in\Loc(X,V,0)\mid\text{$f$ is equivariant}\}$
with the induced topology.
Assume that $\Omega$ is an open invariant subset of $V$.
Elements of $\mathcal{F}_G(\Omega)$ are called
\emph{equivariant local maps}.

Let $I=[0,1]$. 
We assume that the action
of $G$ on $I$ is trivial.
Any element of 
$\mathcal{F}_G(I\times\Omega)$
is called an \emph{otopy}.
Each otopy corresponds to a path in $\mathcal{F}_G(\Omega)$ 
and vice versa.
Given an otopy 
$h\colon\Lambda\subset I\times\Omega\to V$ 
we can define for each $t\in I$:
\begin{itemize}
	\item sets $\Lambda_t=\{x\in\Omega\mid(x,t)\in\Lambda\}$,
	\item maps $h_t\colon\Lambda_t\to V$ with $h_t(x)=h(x,t)$.
\end{itemize}
In this situation we say that 
$h_0$ and $h_1$ are \emph{otopic}.
Otopy gives an equivalence relation 
on $\mathcal{F}_G(\Omega)$.
The set of otopy classes will be denoted by 
$\mathcal{F}_G[\Omega]$.

Let 
$
\mathcal{F}^\nabla_G(\Omega)
$
denote the subspace of $\mathcal{F}_G(\Omega)$ 
(with the relative topology)
consisting of those maps $f$ 
for which there is an invariant
$C^1$-function $\varphi\colon D_f\to\R$ 
such that $f=\nabla\varphi$.
We call such maps \emph{gradient}.
Similarly, we write $\mathcal{F}^\nabla_G(I\times\Omega)$
for the subspace of $\mathcal{F}_G(I\times\Omega)$ 
consisting of such otopies $h$ that
$h_t\in\mathcal{F}^\nabla_G(\Omega)$ for each $t\in I$.
These otopies are called \emph{gradient}.
Let us denote by 
$
\mathcal{F}^\nabla_G[\Omega]
$
the set of the equivalence classes 
of the \emph{gradient otopy} relation.

If $H$ is a closed subgroup of $G$ then
\begin{itemize}
\item $(H)$ stands for the conjugacy class of $H$,
\item $NH$ is the normalizer of $H$ in $G$,
\item $WH$ is the Weyl group of $H$ i.e. $WH=NH/H$.
\end{itemize}

Recall that $G_x=\{g\in G\mid gx=x\}$.
We define the following subsets of $V$:
\begin{align*}
V^H &=\{x\in V\mid H\subset G_x\},\\
\Omega_H &=\{x\in\Omega\mid H=G_x\}.
\end{align*}

Set $\Iso(\Omega):=\{(H)\mid
\text{$H$ is a closed subgroup of $G$ and $\Omega_{H}\neq\emptyset$}\}$.
The set $\Iso(\Omega)$ is partially ordered.
Namely, $(H)\le(K)$ if $H$ is conjugate to a subgroup of $K$.
	
We will make use of
the following well-known facts:
\begin{itemize}
	\item $WH$ is a compact Lie group,
	\item $V^H$ is a linear subspace of $V$ and an orthogonal
	      representation of $WH$,
	\item $\Omega_H$ is open in $V^H$,	      
	\item the action of $WH$ on $\Omega_H$ is free,
	\item the set $\Iso(\Omega)$ is finite.		
\end{itemize}

Assume that $M$ is a smooth (i.e., $C^1$) connected
manifold without boundary. Let 
$\mathcal{F}(M)
\subset\Loc\left(M,TM,\{M\}\right)$
denote the space of local vector fields
equipped with the induced topology.

Suppose, in addition, that
$M$ is Riemannian. Then
a local vector field $v$ is called \emph{gradient}
if there is a smooth function $\varphi\colon D_v\to\R$
such that $v=\nabla\varphi$.
In that case
$\mathcal{F}(M)$
contains the subspace
$\mathcal{F}^\nabla(M)$
consisting of gradient  local vector fields.
Any element of 
$\Map\left(I,\mathcal{F}(M)\right)$
is called an \emph{otopy} and any element of 
$\Map\left(I,\mathcal{F}^\nabla(M)\right)$
is called a \emph{gradient otopy}. If two local vector fields
are connected by a (gradient) otopy,
we call them \emph{(gradient) otopic}.
Of course, (gradient) otopy gives an equivalence relation 
on $\mathcal{F}(M)$
($\mathcal{F}^\nabla(M)$).
The sets of the respective equivalence classes
will be denoted by $\mathcal{F}[M]$
and $\mathcal{F}^\nabla[M]$.

%%%%%%%%%%%%%%%%%%%%%%%%%%%%%  Sec. Phi  %%%%%%%%%%%%%%%%%%%%%%%%%%%%%%%
\section{Definition of \texorpdfstring{$\Phi$}{Phi}}\label{sec:phi}

Assume
that $V$ is a real finite dimensional orthogonal
representation of a compact Lie group $G$ and
$\Omega$ is an open invariant subset of $V$.
The main goal of this section is to define a function
\[
\Phi\colon\mathcal{F}_G^\nabla[\Omega]\to
\prod_{(H)}\mathcal{F}_{WH}^\nabla\left[\Omega_{H}\right],
\]
where the product is taken over $\Iso(\Omega)$.
Before we get into the details
let us sketch the general idea of the construction
on which our definition is based.
Let $f\in\mathcal{F}_G^\nabla(\Omega)$.
Natural approach suggests to take as a value
of $\Phi([f])$ classes of restrictions
\[
f_H:=\restrictionmap{f}{D_f\cap\Omega_{H}}
\]
for every orbit type $(H)$ in $\Iso(\Omega)$.
Unfortunately, so defined $f_H$ may not be
an element of $\mathcal{F}_{WH}^\nabla\left(\Omega_{H}\right)$,
because the set of zeroes of $f_H$ does not need to be compact.
However, it is compact if $(H)$ is maximal in $\Iso(\Omega)$,
because in that case $\Omega_{(H)}$ is closed in~$\Omega$.
Since it is possible to arrange orbit types in $\Iso(\Omega)$
so that $(H_i)\le(H_j)$ implies $j\le i$ we can define
$\Phi_i([f])$ inductively with respect to that linear order.
Namely, in the first step we set $\Phi_1([f])=\left[f_{H_1}\right]$.
Then, since the set of zeroes of $f$ contained in
$\Omega\setminus\Omega_{(H_1)}$ does not need to be compact,
we perturb $f$ to $f'$ in such a way that $D_{f'}\subset D_{f}$,
$f'$ is otopic to $f$ and the set of zeroes of $f'$ in
$\Omega\setminus\Omega_{(H_1)}$ becomes compact.
Note that now $(H_2)$ is maximal in $\Omega\setminus\Omega_{(H_1)}$.
Hence we put $\Phi_2([f])=\left[f'_{H_2}\right]$
and proceed as before for all subsequent orbit types.
 
The formal definition of $\Phi$ will be divided into four steps.

\subsection{Definition of otopy perturbation 
 \texorpdfstring{from $f$ to $f_{U,\epsilon}$}{}}\label{subsec:def}
Assume that $(H)$ is maximal in $\Iso(\Omega)$.
Let $f=\nabla\varphi\in\mathcal{F}_G^\nabla(\Omega)$.
We will define a map $f_{U,\epsilon}$, 
which is otopic to $f$ and which zeros
contained in $\Omega\setminus\Omega_{(H)}$
are compact. To do that we
choose an open invariant subset $U$ of 
$D_f\cap\Omega_{(H)}$ such that
\[
f^{-1}(0)\cap\Omega_{(H)}\subset
U\subset\cl U\Subset\Omega_{(H)}.
\]
For $\epsilon>0$ we introduce the following notation:
\begin{align*}
U^\epsilon=&\{x+v\mid x\in U, v\in N_x, \abs{v}<\epsilon\},\\
B^\epsilon=&\{x+v\mid x\in \bd U, v\in N_x, \abs{v}\le\epsilon\},
\end{align*}
where $N_x$ denotes $\left(T_x\Omega_{(H)}\right)^\bot$.
Throughout the paper whenever symbols $U^\epsilon$
and $B^\epsilon$ appear we assume tacitly that
the set  $\cl\left(U^\epsilon\right)$ 
is contained in some 
tubular neighbourhood of $\Omega_{(H)}$ in $V$.
Observe that this condition is satisfied for
$\epsilon$ sufficiently small.
Moreover, to define $f_{U,\epsilon}$ we will need
two more assumptions:
\begin{equation}\label{eq:def}
U^{\epsilon}\subset D_f
\quad\text{and}\quad
f^{-1}(0)\cap B^{\epsilon}=\emptyset,
\end{equation}
which are also satisfied for $\epsilon$ small enough.

Let us introduce an auxiliary smooth function
$\mu\colon[0,\epsilon]\to\R$ as in Figure \ref{fig:graphs}.
Now define for each $t\in[0,1]$ 
the function $\mu_t\colon[0,\epsilon]\to\R$
by $\mu_t(s)=t\mu(s)+1-t$.
Using the family of functions $\mu_t$  
we will define for $t\in[0,1]$ a family of maps 
$r_t\colon U^{\epsilon}\to U^{\epsilon}$
by the formula
\[
r_t(x+v)=x+\mu_t(\abs{v})v,
\]
where $x\in U, v\in N_x, \abs{v}<\epsilon$.
Observe that $r_0=\id$ and $r_1(x+v)=x$ for $\abs{v}\le2\epsilon/3$.

\begin{figure}[ht]
\centering
\includegraphics[scale=0.7,trim= 45mm 200mm 60mm 42mm]{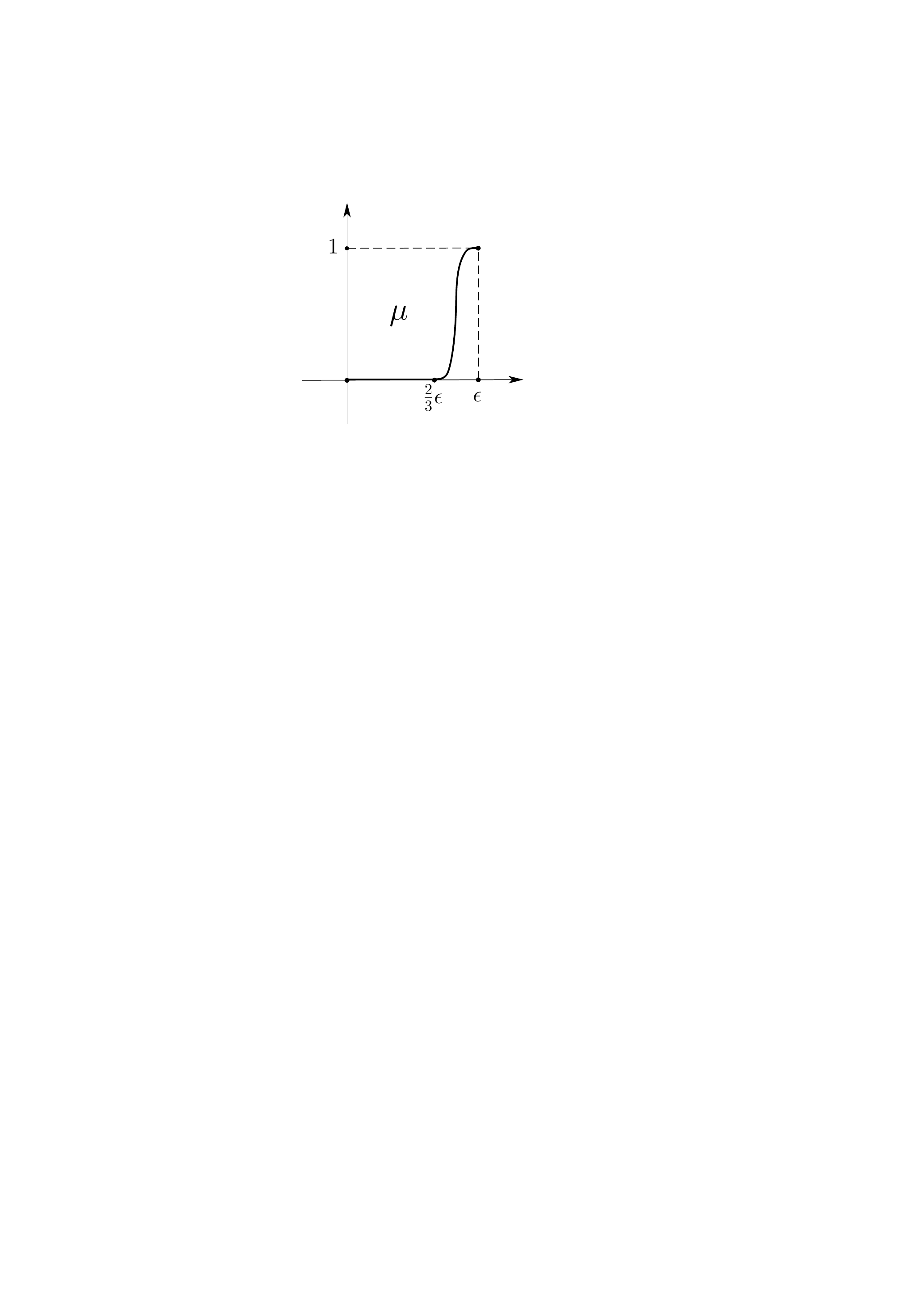}
\includegraphics[scale=0.7,trim= 70mm 200mm 70mm 50mm]{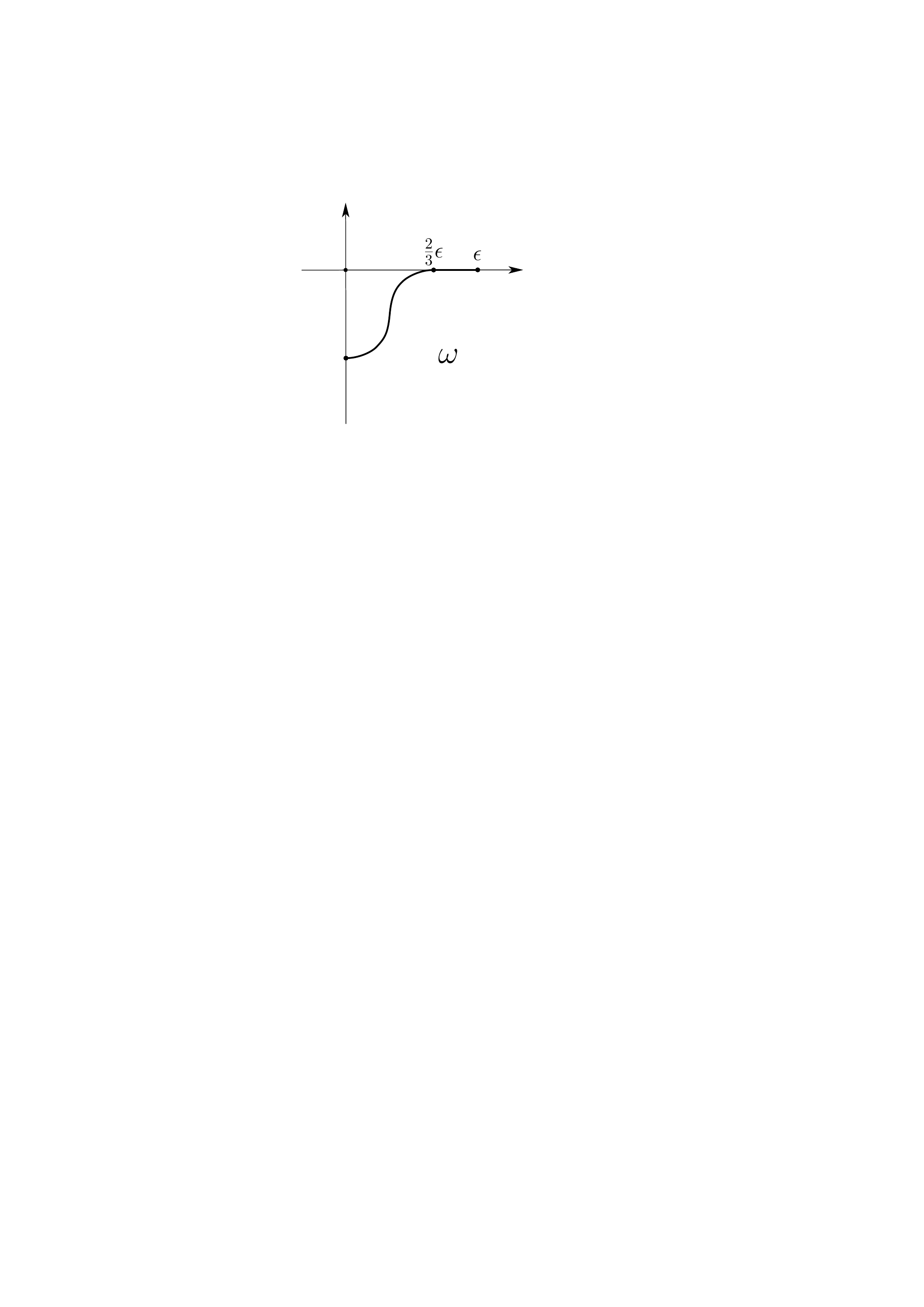}
\caption{Graphs of $\mu$ and $\omega$.}
\label{fig:graphs}
\end{figure}

We will also need another auxiliary function
$\omega\colon[0,\epsilon]\to\R$ (see Figure \ref{fig:graphs}) given by
\[
\omega(s)=
\begin{cases}
\frac12s^2-\frac19\epsilon^2
&\text{for $s\in\left[0,\epsilon/3\right]$},\\
-\frac12\left(s-\frac23\epsilon\right)^2
&\text{for $s\in\left[\epsilon/3,2\epsilon/3\right]$},\\
0
&\text{for $s\in\left[2\epsilon/3,\epsilon\right]$}.
\end{cases}
\]

Now we define for $t\in[0,1]$ a family of potentials
\[
\varphi_t\colon D_f\setminus B^{\epsilon}\to\R
\]
by the formula
\begin{equation}\label{eq:potential}
\varphi_t(z)=\begin{cases}
\varphi(r_{2t}(z))&
\text{if $z\in U^{\epsilon}$ and $t\in[0,1/2]$},\\
\varphi(r_1(z))+(2t-1)\omega(\abs{v})&
\text{if $z=x+v\in U^{\epsilon}$ and $t\in[1/2,1]$},\\
\varphi(z)&
\text{if $z\in D_f\setminus(U^{\epsilon}\cup 
B^{\epsilon})$ and $t\in[0,1]$}.
\end{cases}
\end{equation}
We are now ready to define $f_{U,\epsilon}$
by the formula 
\[
f_{U,\epsilon}=
\nabla\varphi_1.
\]
\begin{figure}[ht]
\centering
\includegraphics[scale=0.47,trim= 80mm 160mm 60mm 55mm]{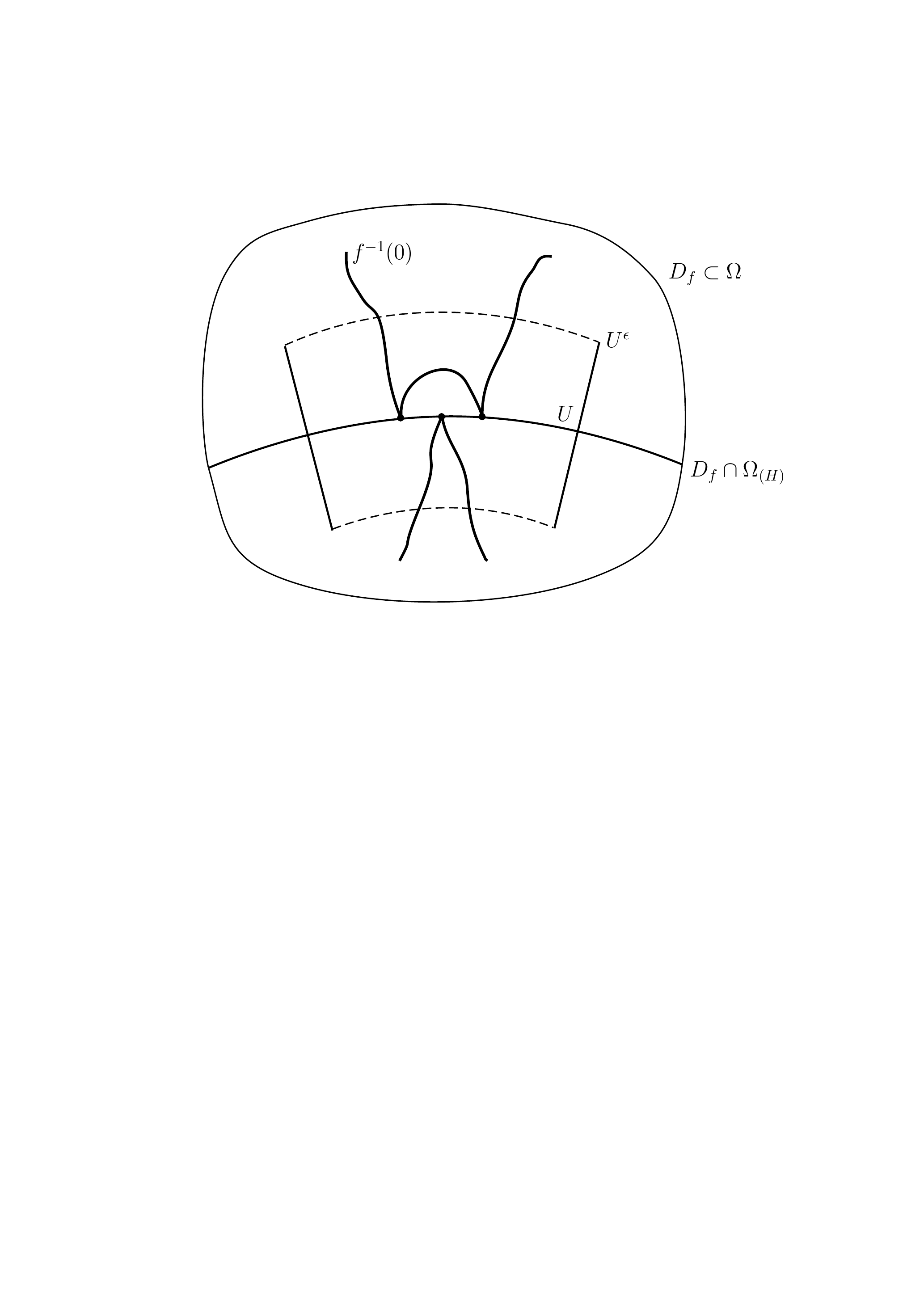}
\includegraphics[scale=0.47,trim= 17mm 160mm 70mm 55mm]{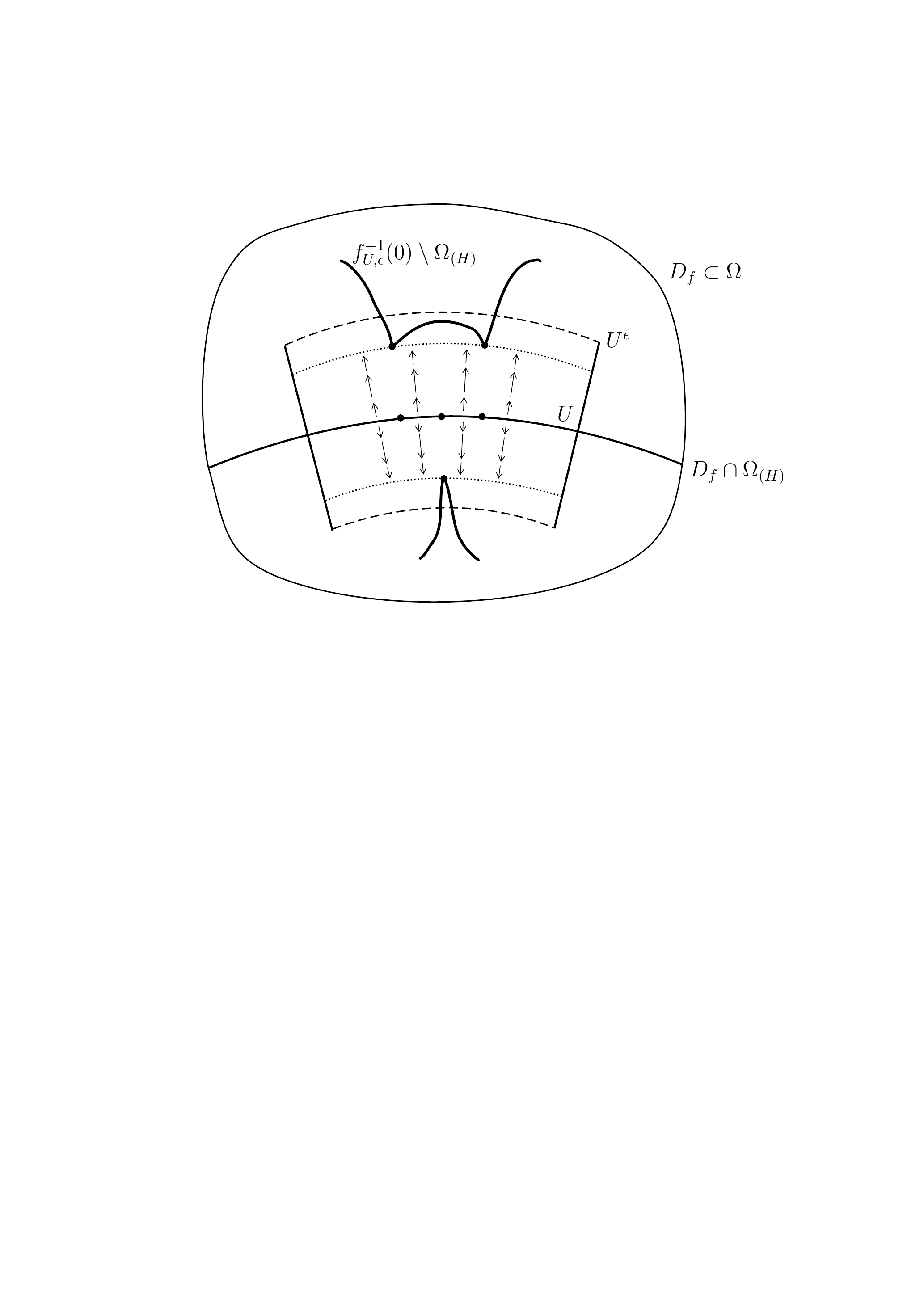}
\caption{Perturbation of $f$.}
\label{fig:perturb}
\end{figure}
\begin{prop}\label{prop:perturb}
The homotopy $h(t,z)=\nabla\varphi_t(z)$ is an otopy
from $\restrictionmap{f}{D_f\setminus B^{\epsilon}}$
to $f_{U,\epsilon}$. 
\end{prop}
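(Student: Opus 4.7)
The plan is to verify that $h$, with domain $\Lambda := I \times (D_f \setminus B^\epsilon)$, is a gradient otopy: namely that \eqref{eq:potential} gives a jointly continuous family with $\nabla\varphi_t$ well-defined, that each $\nabla\varphi_t$ is equivariant and gradient, that the endpoint values are correct, and---the main obstacle---that $h^{-1}(0) \Subset \Lambda$.

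Consistency of \eqref{eq:potential} requires checking two overlaps: at $t = 1/2$ both the first and second branches reduce to $\varphi \circ r_1$; along the outer boundary $\{x + v : x \in U,\,|v| = \epsilon\}$, the relations $\mu(\epsilon) = 1$ and $\omega|_{[2\epsilon/3,\epsilon]} \equiv 0$ force the first two branches to equal $\varphi$, matching the third. Equivariance follows because $U$, the tubular decomposition $z = x + v$ with $v \in N_x$, and the function $|v|$ are $G$-invariant, so $r_t$ is equivariant and $\varphi_t$ is $G$-invariant; the gradient structure is built in. The boundary conditions hold since $\mu_0 \equiv 1$ gives $r_0 = \id$ (so $h_0 = f|_{D_f \setminus B^\epsilon}$) and $h_1 = \nabla\varphi_1 = f_{U,\epsilon}$ by definition.

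The main obstacle is the compactness of $h^{-1}(0)$ in $\Lambda$. On $D_f \setminus (U^\epsilon \cup B^\epsilon)$ one has $\varphi_t = \varphi$, so the zero set there equals $I \times (f^{-1}(0) \setminus U^\epsilon)$, a closed subset of the compact $I \times f^{-1}(0)$ disjoint from $I \times B^\epsilon$ by \eqref{eq:def}. Inside $U^\epsilon$ the zero set lies in $I \times \cl(U^\epsilon) \subset I \times D_f$, so it remains to rule out accumulation on $I \times B^\epsilon$. Suppose $(t_n, z_n) \in h^{-1}(0) \cap (I \times U^\epsilon)$ with $z_n = x_n + v_n$ and $x_n \to x_\infty \in \bd U$. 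For $t_n \in [0, 1/2)$, the chain rule gives $\nabla\varphi_{t_n}(z_n) = (Dr_{2t_n})^T(z_n) \cdot \nabla\varphi(r_{2t_n}(z_n))$, and invertibility of $Dr_{2t_n}$ (valid for $2t_n < 1$) forces $r_{2t_n}(z_n) \in f^{-1}(0)$, whose limit lies in $B^\epsilon$, contradicting \eqref{eq:def}. For $t_n \in [1/2, 1]$ with $|v_n| < 2\epsilon/3$, one has $r_1(z_n) = x_n$, and $G$-invariance of $\varphi$ gives $\nabla\varphi(x_n) \in V^{G_{x_n}} \subset T_{x_n}\Omega_{(H)}$, so the normal component of $\nabla\varphi_{t_n}(z_n)$ collapses to $(2t_n-1)\omega'(|v_n|)v_n/|v_n|$, which is strictly nonzero when $t_n > 1/2$ and $|v_n| \in (0, 2\epsilon/3)$; this forces $v_n = 0$ and $x_n \in f^{-1}(0) \cap U$, a set compactly contained in $U$. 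The slice $t_n = 1/2$ pins zeros to $x \in f^{-1}(0) \cap \Omega_{(H)} \subset U$ by the same tangential analysis, and $|v_n| \ge 2\epsilon/3$ reduces to the first chain-rule argument using invertibility of $Dr_1$ on $(2\epsilon/3, \epsilon]$. This exhausts the cases, establishing $h^{-1}(0) \Subset \Lambda$.
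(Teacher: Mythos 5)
Your proof is correct and follows essentially the same route as the paper's: both reduce the statement to the compactness of $h^{-1}(0)$ and analyze it via the chain-rule identity $\nabla\varphi_t=Dr_{2t}^T\cdot(\nabla\varphi\circ r_{2t})$ for $t\le 1/2$ together with the nonvanishing normal contribution of $(2t-1)\omega(\abs{v})$ for $t>1/2$, and your case split on $t_n$ and $\abs{v_n}$ reproduces the paper's partition of $I\times U^{\epsilon}$ into the regions $A$, $B$, $C$, $D$. If anything, your version is more careful at the seam $t=1/2$ and on $\{\abs{v}=2\epsilon/3\}$, and it additionally records the well-definedness, equivariance and endpoint checks that the paper leaves implicit.
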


\begin{proof}
It is sufficient to show that the set $h^{-1}(0)$ is compact.
Consider the partition of $I\times U^{\epsilon}$
into four subsets (see Figure \ref{fig:part}):
\begin{align*}
A=&[0,1/2]\times U^\epsilon,\\
B=&[1/2,1]\times\left(U^{\epsilon}\setminus U^{2\epsilon/3}\right),\\
C=&[1/2,1]\times\left(U^{2\epsilon/3}\setminus U\right),\\
D=&[1/2,1]\times U.
\end{align*}
\begin{figure}[ht]
\centering
\includegraphics[scale=0.7,trim= 40mm 205mm 60mm 57mm]{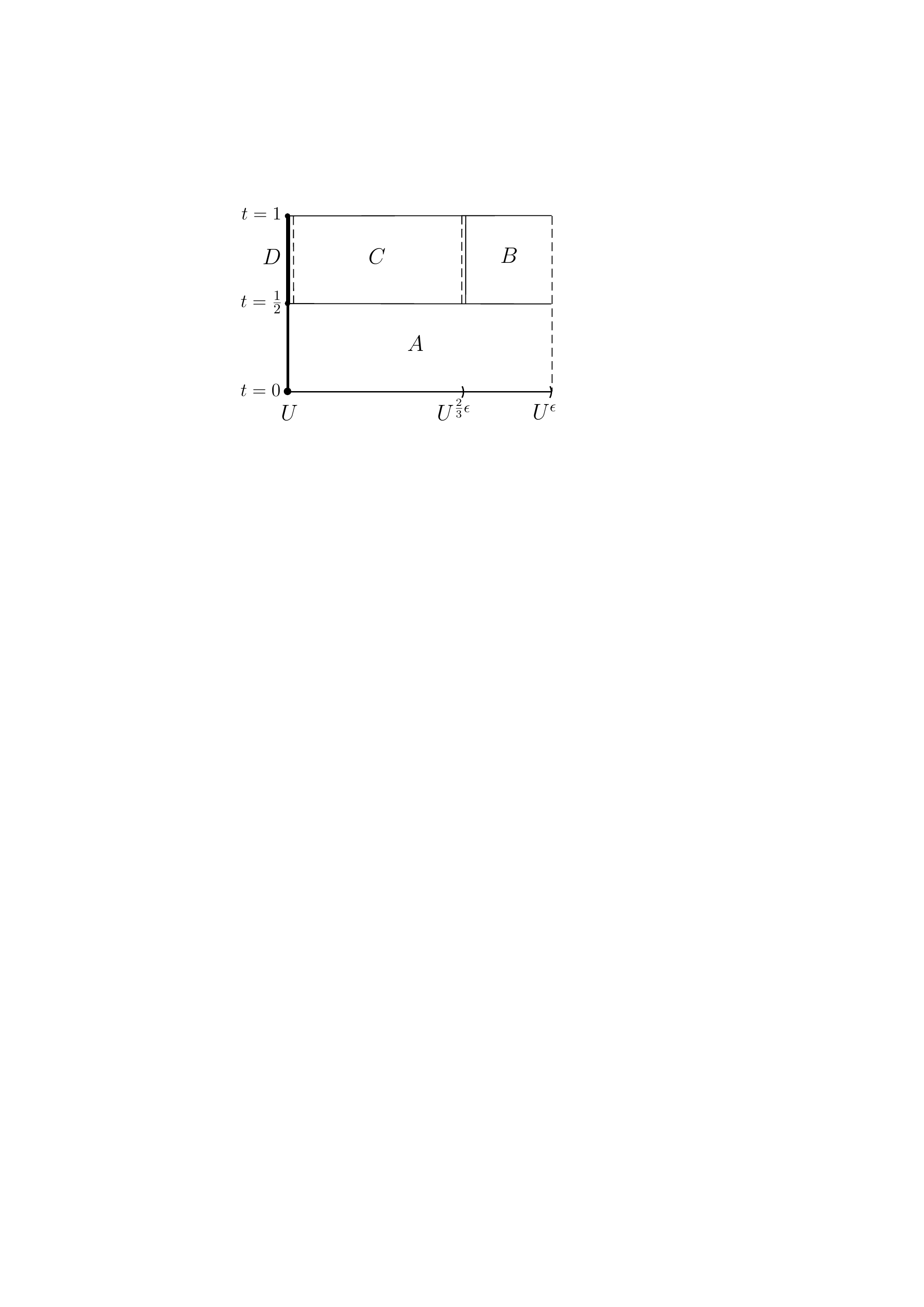}
\caption{Partition of $I\times U^{\epsilon}$.}
\label{fig:part}
\end{figure}

\noindent Since for $t\in[0,1/2]$ and $z\in U^{\epsilon}$ we have
\[
\nabla\varphi_t(z)=
Dr_{2t}^T(z)\cdot
\nabla\varphi_t\left(r_{2t}(z)\right)
\]
and  for $t\in[1/2,1]$ and $z\in U^\epsilon$
the summand $\omega(\abs{v})$
occurs in the formula \eqref{eq:potential},
we obtain the following description 
of the set of zeroes of $h$ in $I\times U^{\epsilon}$:
\begin{itemize}
	\item $h(t,z)=0$ if{f} $f\left(r_{2t}(z)\right)=0$ in $A$,
	\item $h(t,z)=0$ if{f} $f\left(r_{1}(z)\right)=0$ in $B$, 
	\item $h(t,z)\neq0$ in $C$,
	\item $h(t,z)=f(z)$ in $D$.
\end{itemize}
From this we get our claim. 
\end{proof}

Let us introduce the following notation:
\begin{align*}
f^n&=f^n_{U,\epsilon}=\restrictionmap{f_{U,\epsilon}}{U^{\epsilon/3}},\\
f^c&=f^c_{U,\epsilon}=\restrictionmap{f_{U,\epsilon}}{D_{f_{U,\epsilon}}\setminus\Omega_{(H)}},\\
f^a&=\restrictionmap{f^c}{D_{f^c}\setminus\cl\left(U^{\epsilon/3}\right)}.
\end{align*}
\begin{rem}
The following observations will be useful 
in the study of the function~$\wh{\Phi}$,
which will be defined in the next subsection: 
\begin{itemize}
	\item $f^n$ is $(H)$-normal in the sense of Definition \ref{defn:normal},
	\item $\left[f^c\right]=\left[f^a\right]$ in 
	      $\mathcal{F}_G^\nabla\left[\Omega\setminus\Omega_{(H)}\right]$,
	\item $\left[f\right]=\left[f_{U,\epsilon}\right]=\left[f^n\sqcup f^a\right]$ 
	      in $\mathcal{F}_G^\nabla[\Omega]$.
\end{itemize}
\end{rem}

\subsection{Definition of \texorpdfstring{$\wh{\Phi}$}{whPhi}}
Let us define the function
\[
\wh{\Phi}\colon\mathcal{F}_G^\nabla[\Omega]\to
\mathcal{F}_{WH}^\nabla\left[\Omega_H\right]\times
\mathcal{F}_G^\nabla\left[\Omega\setminus\Omega_{(H)}\right]
\] 
by
\[
\wh{\Phi}([f])=(\wh{\Phi}'([f]),\wh{\Phi}''([f]))=
\left(\left[f_H\right],\left[f^c_{U,\epsilon}\right]\right),
\]
where $f_H=\restrictionmap{f}{D_f\cap\Omega_H}$.
The fact that $\wh{\Phi}$ is well-defined will be proved
in Section \ref{sec:proof}.

\subsection{Definition of  \texorpdfstring{$\wh{\Phi}_i$}{whPhii}}
As we have mentioned before the orbit types
are enumerated $(H_1),(H_2),\dotsc,(H_m)$
according to the reverse partial order in  $\Iso(\Omega)$.
Now consider a sequence of open subsets of $\Omega$
\[
\Omega_1\supset\Omega_2\supset\dotsb\supset\Omega_{m},
\]
where $\Omega_1=\Omega$ and 
$\Omega_{i+1}=\Omega_{i}\setminus\Omega_{(H_i)}$.
Let
\[
\wh{\Phi}_i=(\wh{\Phi}'_i,\wh{\Phi}''_i)
\colon\mathcal{F}_G^\nabla[\Omega_{i}]\to
\mathcal{F}_{WH_i}^\nabla\left[\Omega_{H_{i}}\right]\times
\mathcal{F}_G^\nabla\left[\Omega_{i+1}\right]
\] 
be defined as $\wh{\Phi}$ in the previous subsection 
with $\Omega$ replaced by $\Omega_{i}$
and $H$ by $H_i$.

\subsection{Definition of \texorpdfstring{$\Phi$}{Phi}}
Let us start with the inductive definition of
\[
\Xi_i\colon\mathcal{F}_G^\nabla[\Omega]
\to\mathcal{F}_G^\nabla[\Omega_{i+1}].
\]
Set $\Xi_0=\id$ and 
$\Xi_{i}=\wh{\Phi}''_i\circ\Xi_{i-1}$.
Let
\[
\Phi_i\colon\mathcal{F}_G^\nabla[\Omega]\to
\mathcal{F}_{WH_i}^\nabla\left[\Omega_{H_i}\right]
\]
be defined by $\Phi_i=\wh{\Phi}'_i\circ\Xi_{i-1}$.
Finally, let
\[
\Phi\colon\mathcal{F}_G^\nabla[\Omega]\to
\prod_{i=1}^m\mathcal{F}_{WH_i}^\nabla\left[\Omega_{H_i}\right]
\]
be given by $\Phi=(\Phi_1,\dotsc,\Phi_m)$.

\begin{rem}
It is worth pointing out that the obtained $\Phi$
does not depend on the choice of linear extension
of the partial order in $\Iso(\Omega)$, since
$\Omega_{(H)}$ is closed in $\Omega$ for every  
maximal orbit type $(H)$ and therefore the inductive step
can be performed simultaneously 
on all maximal types $(H)$ in $\Omega$.
\end{rem}

\begin{rem}\label{rem:defin}
Alternatively, for given $f\in\mathcal{F}_G^\nabla(\Omega)$ 
we can define two finite sequences
of maps $f_i\in\mathcal{F}_G^\nabla(\Omega_i)$ and 
$f_i'\in\mathcal{F}_{WH_i}^\nabla\left(\Omega_{H_i}\right)$ given by
\[
f_1=f,\quad f_{i+1}=(f_i)^c_{U_i,\epsilon_i},\quad
f_i'=\restrictionmap{f_i}{D_{f_i}\cap\Omega_{H_i}}.
\]
Observe that $D_{f_{i+1}}\subset D_{f_{i}}$
and $\Phi_i([f])=\left[f_i'\right]$.
In this way we obtain the equivalent definition of  $\Phi$.
\end{rem}

%%%%%%%%%%%%%%%%%%%%%%%%%%%%%  Sec. Psi  %%%%%%%%%%%%%%%%%%%%%%%%%%%%%%%
\section{Definition of \texorpdfstring{$\Psi$}{Psi}}\label{sec:psi}

The main result of our paper (Main Theorem in Sec. \ref{sec:main})
describes the properties of the function
$\Theta\colon
\mathcal{F}_G^\nabla[\Omega]\to
\prod_{i=1}^m(\sum_{j}\mathbb{Z})$,
which provides a degree type invariant.
In the previous section we have constructed the decomposition
$\Phi\colon\mathcal{F}_G^\nabla[\Omega]\to
\prod_{i=1}^m\mathcal{F}_{WH_i}^\nabla\left[\Omega_{H_i}\right]$.
The function $\Theta$ will be defined as a composition of
the function $\Phi$ and a family of bijections between
the factors $\mathcal{F}_{WH_i}^\nabla\left[\Omega_{H_i}\right]$
and the direct sum of countably many copies of $\mathbb{Z}$.
Below we present the construction of such a bijection.
 
In this section we assume
that $V$ is a real finite dimensional orthogonal
representation of a compact Lie group $G$ ($\dim V>0$),
$\Omega$ is an open invariant subset of $V$,
$G$ acts freely on $\Omega$ and $M:=\Omega/G$.
It is well known that $M$ is a Riemannian manifold
of positive dimension equipped with the so-called quotient Riemannian metric
(see for instance \cite[Prop. 2.28]{GHL}).

If $U$ is an open invariant subset of $\Omega$
and $\varphi\colon U\to\R$ is an invariant function
then $\wt\varphi$ stands for the quotient function
$\wt\varphi\colon U/G\to\R$.
Let the function 
$\Psi\colon\mathcal{F}^\nabla_G[\Omega]
\to\mathcal{F}^\nabla[M]$
be given by $\Psi([\nabla\varphi])=[\nabla\wt\varphi]$.
The following result was proved in \cite[Cor. 5.2]{BP4}.
\begin{thm}\label{thm:psi}
$\Psi$ is a well-defined bijection.
\end{thm}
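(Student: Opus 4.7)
The plan is to build $\Psi$ on the fact that, since $G$ is compact and acts freely by isometries on $\Omega$, the quotient map $\pi\colon\Omega\to M=\Omega/G$ is a Riemannian submersion and a proper map. The workhorse observation is that for any invariant $C^1$-function $\varphi\colon U\to\R$, the gradient $\nabla\varphi$ is automatically horizontal (orthogonal to the $G$-orbits, because $\varphi$ is constant along them) and the restriction of $d\pi$ to horizontal vectors is a linear isometry to $TM$. Hence the fibrewise identity
\[
d\pi_x\bigl(\nabla\varphi(x)\bigr)=\nabla\wt\varphi\bigl(\pi(x)\bigr)
\]
holds. In particular $x\in(\nabla\varphi)^{-1}(0)$ if and only if $\pi(x)\in(\nabla\wt\varphi)^{-1}(0)$, so $(\nabla\varphi)^{-1}(0)=\pi^{-1}\bigl((\nabla\wt\varphi)^{-1}(0)\bigr)$; properness of $\pi$ then transports the compactness condition in both directions.

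With this in hand I would verify well-definedness and the bijection as follows. For well-definedness, if $h=\nabla\varphi_t$ is a gradient otopy on $\Lambda\subset I\times\Omega$ between representatives of $[\nabla\varphi_0]$ and $[\nabla\varphi_1]$, then the $G$-invariance of each $\varphi_t$ (and of $\Lambda$) lets the family descend to a $C^1$-potential $\wt\varphi_t$ on $\Lambda/G\subset I\times M$, and the pointwise identity above shows $\nabla\wt\varphi_t$ is an otopy whose zero set is the $\pi$-image of the zero set of $h$, hence compact. Likewise a choice of invariant potential is irrelevant because two such differ by a locally constant function, which has the same effect downstairs.

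For surjectivity, given $[\nabla\psi]\in\mathcal{F}^\nabla[M]$ with $\psi\colon W\to\R$, I would set $\varphi:=\psi\circ\pi$ on the invariant open set $\pi^{-1}(W)$; it is invariant, and the identity together with properness of $\pi$ guarantees that $(\nabla\varphi)^{-1}(0)=\pi^{-1}((\nabla\psi)^{-1}(0))$ is compact in $\pi^{-1}(W)$, so $\nabla\varphi\in\mathcal{F}^\nabla_G(\Omega)$ and $\Psi([\nabla\varphi])=[\nabla\psi]$. For injectivity, suppose $\Psi([\nabla\varphi_0])=\Psi([\nabla\varphi_1])$ via a gradient otopy $\nabla\wt\eta_t$ on some $\wt\Lambda\subset I\times M$ interpolating between $\nabla\wt\varphi_0$ and $\nabla\wt\varphi_1$. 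Pulling back through $(\id_I,\pi)$ gives an invariant family $\eta_t\circ\pi$ on $\Lambda:=(\id_I,\pi)^{-1}(\wt\Lambda)$, and $\nabla(\eta_t\circ\pi)$ is an equivariant gradient otopy whose zero set is $\pi^{-1}$ of the zero set of $\nabla\wt\eta_t$, hence compact; at the endpoints the potentials differ from $\varphi_i$ by locally constant invariant functions, which does not affect their gradient otopy class.

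The routine checks are that the quotient Riemannian metric really turns $\pi$ into a Riemannian submersion and that the domain bookkeeping $\Lambda=(\id_I,\pi)^{-1}(\wt\Lambda)$ is legitimate inside the topology of $\Loc$. The genuinely delicate point I expect to be the main obstacle is verifying that the gradient otopy relation is preserved in both directions in full generality, i.e.\ that the correspondence $\varphi\leftrightarrow\wt\varphi$ is compatible not only pointwise but also with the global open-domain structure of otopies required by the definition of $\mathcal{F}^\nabla_G(I\times\Omega)$ and $\mathcal{F}^\nabla(I\times M)$; this is exactly the technical content carried out in \cite[Cor.~5.2]{BP4}.
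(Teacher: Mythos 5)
Your sketch is correct and follows the expected route: the paper itself does not prove Theorem \ref{thm:psi} but cites \cite[Cor.~5.2]{BP4}, and the argument there rests on exactly the ingredients you isolate --- the quotient map is a proper Riemannian submersion for the quotient metric, gradients of invariant potentials are horizontal and project onto the gradients of the quotient potentials, so zero sets correspond under $\pi$ and properness transfers their compactness in both directions. The only point worth flagging is that a gradient otopy only requires each time slice to be gradient (the potentials need not vary continuously in $t$), so the descent and lift of otopies must be done slice-wise, with continuity and compactness of the zero set checked on the total map via the exponential-law correspondence between otopies and paths in $\mathcal{F}^\nabla(M)$; your proposal handles this implicitly and correctly.
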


\begin{rem}\label{rem:gradient}
Let $\{M_j\}$ denote the set of components of $M$.
In \cite{BP4} we proved that the intersection number $\I$
establishes a bijection $\mathcal{F}^\nabla[M_j]\approx\Z$.
Consequently, the restrictions of $f\in\mathcal{F}^\nabla(M)$
to the components of $M$ establish a natural bijection
$\I\colon\mathcal{F}^\nabla[M]\to\sum_j\Z$.
Note that a direct sum (not product) appears in the last formula,
since for any gradient local vector field $f$ the preimage 
of the zero section meets only a finite
number of components of $M$ and, in consequence, almost all
restrictions of $f$ are otopic to the empty map.
\end{rem} 

\begin{cor}\label{cor:psi}
The composition
\[
\I\circ\Psi\colon
\mathcal{F}_G^\nabla[\Omega]\to
\mathcal{F}^\nabla[\Omega/G]\to
\sum_j\Z,
\]
where the direct sum is taken over
the set of connected components of $\Omega/G$,
is a bijection. Moreover, if 
$f,g\in\mathcal{F}_G^\nabla(\Omega)$ such that
$D_f\cap D_g=\emptyset$ then
\[
\I\circ\Psi\left(\left[f\sqcup g\right]\right)=
\I\circ\Psi\left(\left[f\right]\right)+
\I\circ\Psi\left(\left[g\right]\right).
\]
\end{cor}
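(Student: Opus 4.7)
The plan is to derive the corollary as a direct consequence of Theorem \ref{thm:psi} and Remark \ref{rem:gradient}, breaking it into the bijection claim and the additivity claim.

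For the bijection part, I would simply observe that the corollary asserts that a composition of two maps already known to be bijections is a bijection. Theorem \ref{thm:psi} says $\Psi \colon \mathcal{F}^\nabla_G[\Omega] \to \mathcal{F}^\nabla[M]$ is a bijection, where $M = \Omega/G$ (the hypotheses on $G$ acting freely on $\Omega$ are exactly what makes $M$ a Riemannian manifold so that Theorem \ref{thm:psi} applies). Then Remark \ref{rem:gradient} identifies $\mathcal{F}^\nabla[M]$ with $\sum_j \Z$ via $\I$, where the sum ranges over connected components of $M = \Omega/G$. Composing gives the claimed bijection, with essentially no further work to do.

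For the additivity part, the key observation is that both $\Psi$ and $\I$ respect disjoint unions. I would start by writing $f = \nabla \varphi$ and $g = \nabla \psi$ on their invariant domains $D_f, D_g$. Since $f$ and $g$ are equivariant, $D_f$ and $D_g$ are $G$-invariant; since they are also disjoint and the action is free, their images $D_f/G$ and $D_g/G$ in $M$ are disjoint open subsets. Hence the quotient potential of $\varphi \sqcup \psi$ is exactly $\widetilde{\varphi} \sqcup \widetilde{\psi}$, and since taking the gradient commutes with restriction to disjoint open pieces,
\[
\Psi([f \sqcup g]) = [\nabla \widetilde{\varphi} \sqcup \nabla \widetilde{\psi}].
\]
The final step is to invoke Remark \ref{rem:gradient}: $\I$ is defined componentwise on $M$, and on each component $M_j$ the intersection number of a disjoint union equals the sum of the intersection numbers (because the preimages of the zero section split into two disjoint finite sets, or because one of the summands is otopic to the empty map on $M_j$). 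Summing over components yields $\I\circ\Psi([f\sqcup g]) = \I\circ\Psi([f]) + \I\circ\Psi([g])$.

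There is really no serious obstacle here; the statement is a bookkeeping consequence of results already established. The only point that requires a brief verification is the passage to the quotient, namely that disjointness of invariant domains in $\Omega$ passes to disjointness in $M$ and that the gradient of a potential defined piecewise on disjoint open sets equals the piecewise gradient. Both are immediate once one uses that $D_f, D_g$ are $G$-invariant (forced by equivariance) and that the quotient map $\Omega \to M$ is open.
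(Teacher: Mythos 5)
Your proposal is correct and matches the paper's (implicit) argument: the paper gives no proof of this corollary, presenting it as an immediate consequence of Theorem \ref{thm:psi} and Remark \ref{rem:gradient}, exactly as you do. Your additional verification that disjointness of the invariant domains descends to the quotient and that $\I$ is additive over disjoint unions is the right bookkeeping and raises no issues.
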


%%%%%%%%%%%%%%%%%%%%%%%%%%%%%  Sec. Theta %%%%%%%%%%%%%%%%%%%%%%%%%%%%%%
\section{Definition of  \texorpdfstring{$\Theta$}{Theta}}\label{sec:theta}

We are now ready to define the invariant $\Theta$.
Assume that $V$ is a real finite dimensional orthogonal
representation of a compact Lie group $G$ and
$\Omega$ is an open invariant subset of $V$.
Let
$\Psi_i\colon\mathcal{F}^\nabla_{WH_i}[\Omega_{H_i}]
\to\mathcal{F}^\nabla[\Omega_{H_i}/WH_i]$
denote the function $\Psi$ defined in the previous section
with $G$ replaced by $WH_i$ and
$\Omega$ replaced by $\Omega_{H_i}$
($WH_i$ acts freely on $\Omega_{H_i}$).
Recall that $(\Omega_{H_i}/WH_i)_j$ denotes
the $j$-th component of $\Omega_{H_i}/WH_i$.
Let $\pi_{ij}\colon\mathcal{F}^\nabla[\Omega_{H_i}/WH_i]\to
\mathcal{F}^\nabla[(\Omega_{H_i}/WH_i)_j]$ 
denote the function induced by the restriction 
of a gradient local vector field 
to $j$-th component of $\Omega_{H_i}/WH_i$. Finally,
set 
\[
\Theta_{ij}=\I\circ\,\pi_{ij}\circ\Psi_i\circ\Phi_i
\]
and 
\[
\Theta=\{\Theta_{ij}\}
\colon\mathcal{F}_G^\nabla[\Omega]\to
\prod_{i=1}^m\biggl(\sum_{j}\mathbb{Z}\biggr).
\]
Correctness of the above definition requires 
that $\dim\Omega_{H_i}/WH_i>0$ for $i=1,\dotsc,m$,
which is necessary for calculation of the intersection number.
This is the case when $\dim V^{H_1}>0$.
The opposite case, in which $0\in\Omega$ and $\dim V^G=0$,
will be discussed in Remark \ref{rem:opposite} after the proof of Main Result.

%%%%%%%%%%%%%%%%%%%%%%%%%%%%%  Sec. Hnormal  %%%%%%%%%%%%%%%%%%%%%%%%%%%%
\section{ \texorpdfstring{$(H)$-normal}{(H)-normal} maps}\label{sec:hnormal}

The maps discussed in this section are essential
for the formulation of Theorem \ref{thm:phi}.
These maps are important, because in some sense
they are ``generic'' with respect to $\Phi$.
Namely, if $f$ is $(H_i)$-normal then
$\Phi_i([f])=[f_{H_i}]$,
where $f_{H_i}=\restrictionmap{f}{D_f\cap\Omega_{H_i}}$,
and $\Phi_i([f])=[\emptyset]$ for $i\neq j$.
Let $U$ be an open bounded invariant subset of $\Omega_{(H)}$.
Recall that 
$U^\epsilon=\{x+v\mid x\in U, v\in N_x, \abs{v}<\epsilon\}$
and $B^\epsilon=\{x+v\mid x\in \bd U, v\in N_x, \abs{v}\le\epsilon\}$.

\begin{defn}\label{defn:normal}
A map $f\in\mathcal{F}_G^\nabla(\Omega)$ 
is called \emph{$(H)$-normal} (on $U^\epsilon$) if
\begin{itemize}
\item $f^{-1}(0)\subset U$,
\item $U^\epsilon\subset D_f$,
\item $\restrictionmap{f}{U^\epsilon}=\nabla\varphi$, where
      $\varphi\colon U^\epsilon\to\R$ is 
      an invariant $C^1$-function satisfying \linebreak
      $\varphi(x+v)=\varphi(x)+\frac12\abs{v}^2$     
      for $x\in U$, $v\in N_x\Omega_{(H)}$, $\abs{v}<\epsilon$.
\end{itemize}
\end{defn}

The following result describes a basic property of
$(H)$-normal maps and their behaviour under perturbation.

\begin{prop}\label{prop:normal}
Assume that $(H)$ is maximal in $\Iso(\Omega)$ and $f$ is $(H)$-normal on $U^{\epsilon}$.
Then $\left[f^c_{U,\epsilon}\right]=[\emptyset]$ 
in $\mathcal{F}_G^\nabla\left[\Omega\setminus\Omega_{(H)}\right]$.
\end{prop}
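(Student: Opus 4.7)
The plan is to construct an explicit gradient otopy on $\Omega\setminus\Omega_{(H)}$ connecting $f^c_{U,\epsilon}$ to the empty map. The key observation is that, by $(H)$-normality, the restriction $\restrictionmap{f}{D_f\setminus\Omega_{(H)}}$ has no zeros (since $f^{-1}(0)\subset U\subset\Omega_{(H)}$), and the linear interpolation between $\varphi_1$ and $\varphi$ through invariant potentials can be arranged so that all critical points away from $\Omega_{(H)}$ occur only at the endpoint $\sigma=0$.

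Concretely, I would first rewrite $\varphi_1$ on $U^\epsilon$ using the $(H)$-normal form $\varphi(x+v)=\varphi(x)+\frac{1}{2}\abs{v}^2$: the definitions of $r_1$ and $\omega$ yield $\varphi_1(x+v)=\varphi(x)+g(\abs{v})$, where $g(s)=\omega(s)$ on $[0,2\epsilon/3]$ and $g(s)=\frac{1}{2}\mu(s)^2 s^2$ on $[2\epsilon/3,\epsilon]$. From the explicit shape of $\omega$ and $\mu$ it follows that $g$ is $C^1$, that $g'\ge 0$ on $[0,\epsilon]$, and that $g'(s)=0$ precisely when $s\in\{0,2\epsilon/3\}$. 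Next, define the invariant $C^1$-family
\[
\psi_\sigma:=\sigma\varphi+(1-\sigma)\varphi_1\qquad(\sigma\in I),
\]
and set $h(\sigma,z):=\nabla\psi_\sigma(z)$. Then $h_0=\nabla\varphi_1=f_{U,\epsilon}$ and $h_1=\nabla\varphi=f$, and $h$ is a gradient otopy candidate on $I\times(D_f\setminus B^\epsilon)$.

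The main computation is to locate the zeros of $h$ inside $I\times D_{f^c}$. Outside $U^\epsilon$ the family is constantly equal to $f$, hence has no zeros in $D_{f^c}$. On $U^\epsilon\setminus U$, working in tubular coordinates $(x,v)$ with $\abs{v}>0$, one finds
\[
\psi_\sigma(x+v)=\varphi(x)+G_\sigma(\abs{v}),\qquad G_\sigma(s):=\frac{\sigma}{2}s^2+(1-\sigma)g(s),
\]
so $\nabla\psi_\sigma(x+v)=0$ forces both $\nabla\varphi(x)=0$ (i.e.\ $x\in f^{-1}(0)$) and $G_\sigma'(\abs{v})=\sigma\abs{v}+(1-\sigma)g'(\abs{v})=0$. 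Since the two summands are non-negative and $\abs{v}>0$, the latter equation demands $\sigma=0$ and $\abs{v}=2\epsilon/3$. Consequently
\[
h^{-1}(0)\cap(I\times D_{f^c})=\{0\}\times\{\,x+v\mid x\in f^{-1}(0),\,v\in N_x,\,\abs{v}=2\epsilon/3\,\},
\]
which is compact in $I\times D_{f^c}$, so the restriction of $h$ is a valid gradient otopy from $f^c_{U,\epsilon}$ to $\restrictionmap{f}{D_{f^c}}$.

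To close the argument, $\restrictionmap{f}{D_{f^c}}$ has empty zero set and is therefore otopic to the empty map via the domain-shrinking otopy on $\Lambda=[0,1)\times D_{f^c}$ defined by $(t,z)\mapsto f(z)$; concatenation yields $[f^c_{U,\epsilon}]=[\emptyset]$. The step I expect to be the most delicate is the positivity statement $G_\sigma'>0$ on $(0,\epsilon]$ for every $\sigma>0$: all potential zeros of the interpolation must be confined to the single slice $\sigma=0$, and this is precisely where the explicit shape of $\omega$ and the monotonicity hardwired into $\mu$ play a decisive role.
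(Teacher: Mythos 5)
Your proof is correct and follows essentially the same strategy as the paper: an explicit homotopy of invariant potentials on the normal disc bundle over $U$, with the zeros located by the sign analysis of the radial derivative (nonnegative summands forcing all zeros into the compact slice $\{0\}\times\{x+v\mid x\in f^{-1}(0),\ \abs{v}=2\epsilon/3\}$), followed by the standard ``empty zero set implies otopic to $\emptyset$'' step. The only difference is the choice of deformation --- the paper adds the bump $t\,\omega\bigl(\tfrac23\abs{v}\bigr)$ to $\varphi_1$ to reach a critical-point-free potential, while you linearly interpolate $\varphi_1$ back to $\varphi$ and use that $f$ itself has no zeros off $\Omega_{(H)}$ --- and both verifications rest on the same monotonicity properties of $\mu$ and $\omega$.
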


\begin{proof}
By definition, $f^{-1}(0)\subset U\subset\Omega_{(H)}$,
$f=\nabla\varphi$,
where $\varphi(x+v)=\varphi(x)+\frac12\abs{v}^2$ 
for $x+v\in U^{\epsilon}\subset D_f$,
and 
$
f^c_{U,\epsilon}=
\restrictionmap{\nabla\varphi_1}{D_{\varphi_1}\setminus\Omega_{(H)}}
$,
where
\[
\varphi_1(z)=\begin{cases}
\varphi(x)+\frac12\mu^2(\abs{v})\abs{v}^2+\omega(\abs{v})&
\text{if $z=x+v\in U^{\epsilon}$},\\
\varphi(z)&
\text{if $z\in D_f\setminus (U^{\epsilon}\cup B^{\epsilon})$}.
\end{cases}
\]
Define the family of potentials
$h_t\colon D_{\varphi_1}\setminus\Omega_{(H)}\to\R$
\[
h_t(z)=
\begin{cases}
\varphi(x)+\frac12\mu^2(\abs{v})\abs{v}^2+\omega(\abs{v})+t\omega\left(\frac23\abs{v}\right)&
\text{if $z=x+v\in U^{\epsilon}\setminus U$},\\
\varphi(z)&
\text{if $z\in D_f\setminus (U^{\epsilon}\cup 
B^{\epsilon}\cup\Omega_{(H)})$}.
\end{cases}
\]
Observe that
$\nabla h_t$ is a path from $f^c_{U,\epsilon}$ to $\nabla h_1$ in 
$\mathcal{F}_G^\nabla\left(\Omega\setminus\Omega_{(H)}\right)$ and
$\left(\nabla h_1\right)^{-1}(0)=\emptyset$.
Hence $\left[f^c_{U,\epsilon}\right]=[\emptyset]$ 
in $\mathcal{F}_G^\nabla\left[\Omega\setminus\Omega_{(H)}\right]$.
\end{proof}

%%%%%%%%%%%%%%%%%%%%%%%%%%%%%  Sec. onormal  %%%%%%%%%%%%%%%%%%%%%%%%%%%%%%
\section{Orbit-normal maps}\label{sec:onormal}

Here we introduce an important subclass of
$(H)$-normal maps, which will appear in 
the formulation of Main Theorem
as base functions for $\Theta$.
Let $\Or$ denote a $G$-orbit in $\Omega$. Assume that
$\Or^\epsilon:=\{x+v\mid x\in\Or, v\in(T_x\Or)^\bot,\abs{v}<\epsilon\}$
is contained with its closure 
in some tubular neighbourhood of $\Or$ in $\Omega$.

\begin{defn}
A map $f\in\mathcal{F}_G^\nabla(\Omega)$ 
is called \emph{orbit-normal} around $\Or$ if
\begin{itemize}
\item $f^{-1}(0)=\Or$,
\item $\Or^\epsilon\subset D_f$,
\item $f(x+v)=v$ for $x+v\in\Or^\epsilon$.
\end{itemize}
\end{defn}

The three following properties of orbit-normal maps
will be needed in the proof of Main Theorem.
The first one explains the relation between
the notions of orbit-normal and $(H)$-normal maps.

\begin{prop}\label{prop:onormal}
If $(H)$ is an orbit type of an orbit $\Or$ then
every orbit-normal map around $\Or$ is also $(H)$-normal.
\end{prop}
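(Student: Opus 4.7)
The plan is to realize the orbit-normal $f$ as $(H)$-normal on a tubular neighbourhood of $\Or$ coming from
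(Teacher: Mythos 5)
Your proposal stops after announcing the plan, so there is no argument to evaluate: the entire proof is missing. Moreover, the one idea you do name --- working on ``a tubular neighbourhood of $\Or$'' --- is not by itself enough, because the definition of $(H)$-normality is not phrased in terms of a tubular neighbourhood of the orbit: it requires an open invariant set $U\subset\Omega_{(H)}$ and a potential on $U^\epsilon$ of the form $\varphi(x+v)=\varphi(x)+\tfrac12\abs{v}^2$ where $v$ ranges over the normal space $N_x=(T_x\Omega_{(H)})^\bot$ to the \emph{stratum} $\Omega_{(H)}$, not the normal space to the orbit. So the real content of the proof is to reconcile these two normal directions.

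Concretely, what is needed (and what the paper does) is the orthogonal splitting $(T_x\Or)^\bot=N_1^x\oplus N_2^x$ with $N_1^x=(T_x\Or)^\bot\cap T_x\Omega_H$ and $N_2^x=(T_x\Omega_{(H)})^\bot$, which follows from the standard identity $T_x\Omega_{(H)}=T_x\Or\oplus\bigl((T_x\Or)^\bot\cap V^H\bigr)$. One then takes $U=\{x+v_1\mid x\in\Or,\ v_1\in N_1^x,\ \abs{v_1}<\tfrac{\sqrt2}{2}\epsilon\}$, checks that $U^{\frac{\sqrt2}{2}\epsilon}\subset\Or^\epsilon$ (this is where the factor $\tfrac{\sqrt2}{2}$ enters, via $\abs{v_1}^2+\abs{v_2}^2<\epsilon^2$), and verifies that on this set $f(x+v_1+v_2)=v_1+v_2=f(x+v_1)+v_2$, i.e.\ the potential is $\varphi(x+v_1)+\tfrac12\abs{v_2}^2$ with $v_2$ normal to $\Omega_{(H)}$, which is exactly Definition \ref{defn:normal}. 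None of these steps appears in your proposal, so as it stands it is a gap rather than a proof.
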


\begin{proof}
Assume that $f\in\mathcal{F}_G^\nabla(\Omega)$ is orbit-normal 
around $\Or$. Let for $x\in\Or$ ($G_x=H$)
\begin{align*}
N_1^x:=&(T_x(\Or))^\bot\cap T_x\Omega_{H},\\
N_2^x:=&(T_x\Omega_{(H)})^\bot.
\end{align*} 
By definition, $N_1^x\perp N_2^x$.
We will show that $(T_x\Or)^\bot=N_1^x\oplus N_2^x$.
It is well-known that
\[
T_x\Omega_{(H)}=
T_x\Or\oplus((T_x\Or)^\bot\cap V^H).
\]
Hence
\begin{multline*}
(T_x(\Or))^\bot\cap T_x\Omega_{(H)}=
(T_x(\Or))^\bot\cap(T_x\Or\oplus((T_x\Or)^\bot\cap V^H))=\\
(T_x\Or)^\bot\cap V^H)=
(T_x(\Or))^\bot\cap T_x\Omega_{H}=N_1^x
\end{multline*}
and, in consequence, 
\[
(T_x\Or)^\bot=
((T_x\Or)^\bot\cap T_x\Omega_{(H)})\oplus(T_x\Omega_{(H)})^\bot
=N_1^x\oplus N_2^x.
\]
Set $U=\{x+v_1\mid x\in\Or, v_1\in N_1^x,
\abs{v_1}<\tfrac{\sqrt{2}}{2}\epsilon\}$.
Since
\[
U^{\frac{\sqrt{2}}{2}\epsilon}
=\{x+v_1+v_2\mid x\in\Or, v_1\in N_1^x,
\abs{v_1}<\tfrac{\sqrt{2}}{2}\epsilon,
v_2\in N_2^x, \abs{v_2}<\tfrac{\sqrt{2}}{2}\epsilon\}
\subset\Or^\epsilon,
\]
we have $f(x+v_1+v_2)=v_1+v_2=f(x+v_1)+v_2$ for 
$x+v_1+v_2\in U^{\frac{\sqrt{2}}{2}\epsilon}$,
which proves that $f$ is $(H)$-normal.
\end{proof}

It turns out that the property of being orbit-normal
is inherited by the restriction to $\Omega_{H}$.

\begin{prop}\label{prop:free}
Assume that $G$-orbit $\Or$ has orbit type $(H)$.
If $f\in\mathcal{F}_G^\nabla(\Omega)$ is orbit-normal 
around $\Or$ then 
$\restrictionmap{f}{D_f\cap\Omega_{H}}
\in\mathcal{F}_{WH}^\nabla(\Omega_H)$
is orbit-normal around $WH$-orbit $\Or\cap\Omega_H$.
\end{prop}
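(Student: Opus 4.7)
The goal is to verify the three defining conditions of orbit-normality (zero set, domain containing a tubular neighbourhood, and the explicit formula $f(x+v)=v$) for the restriction $g := \restrictionmap{f}{D_f\cap\Omega_H}$ around the $WH$-orbit $\mathcal{O}\cap\Omega_H$, together with the fact that $g$ is gradient and $WH$-equivariant.

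The plan is to reuse the orthogonal decomposition from the proof of Proposition \ref{prop:onormal}. First I would identify the normal bundle of the $WH$-orbit $\mathcal{O}\cap\Omega_H$ inside $\Omega_H\subset V^H$. Since $\Omega_H$ is open in $V^H$, and the $WH$-action on $\Omega_H$ is the restriction of the $NH$-action on $V^H$, for $x\in\mathcal{O}\cap\Omega_H$ (with $G_x=H$) the tangent space to $\mathcal{O}\cap\Omega_H$ at $x$ equals $T_x\mathcal{O}\cap V^H$. The orthogonal complement of this subspace inside $V^H$ I claim equals $N_1^x=(T_x\mathcal{O})^\bot\cap V^H$. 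Indeed, the ``vertical'' piece $N_2^x=(T_x\Omega_{(H)})^\bot$ is orthogonal to $V^H$ (because $V^H\subset T_x\Omega_{(H)}$), so decomposing any $w\in V^H$ against $T_x\mathcal{O}=(T_x\mathcal{O}\cap V^H)\oplus(\text{piece in }(V^H)^\bot)$ shows $w\perp T_x\mathcal{O}\cap V^H$ in $V^H$ if and only if $w\in(T_x\mathcal{O})^\bot\cap V^H=N_1^x$.

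Next I would note that the tubular neighbourhood $(\mathcal{O}\cap\Omega_H)^{\epsilon'}$ of the $WH$-orbit inside $\Omega_H$ consists of points $x+v_1$ with $x\in\mathcal{O}\cap\Omega_H$, $v_1\in N_1^x$, $|v_1|<\epsilon'$. Taking $\epsilon'\le\epsilon$ (with $\epsilon$ from the orbit-normality of $f$), every such $x+v_1$ lies in $\mathcal{O}^\epsilon\cap V^H\subset D_f\cap\Omega_H$, so the domain requirement holds. The formula is then immediate: $f(x+v_1)=v_1$ by orbit-normality of $f$, so $g(x+v_1)=v_1$. For the zero set, $g^{-1}(0)=f^{-1}(0)\cap\Omega_H=\mathcal{O}\cap\Omega_H$.

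It remains to check $g\in\mathcal{F}_{WH}^\nabla(\Omega_H)$. Equivariance under $WH=NH/H$ is inherited from $G$-equivariance of $f$ via restriction to $V^H$. For the gradient structure, write $f=\nabla\varphi$ on an invariant neighbourhood of $D_f\cap\Omega_H$ (on this neighbourhood $f$ is gradient by hypothesis); the restriction $\varphi|_{D_f\cap\Omega_H}$ is a $WH$-invariant $C^1$-function, and since $\varphi$ is $H$-invariant the gradient $\nabla\varphi$ at points of $V^H$ lies in $V^H$ and coincides with the gradient of the restriction—this is the standard fact that for an orthogonal representation the operations ``take gradient'' and ``restrict to the fixed-point set'' commute on invariant functions. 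Thus $g=\nabla(\varphi|_{D_f\cap\Omega_H})$.

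The one step that deserves care is the identification $(T_x(\mathcal{O}\cap\Omega_H))^\bot|_{V^H}=N_1^x$; once this is in place the rest is bookkeeping, so I do not anticipate a serious obstacle.
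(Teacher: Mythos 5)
Your proof is correct and follows essentially the same route as the paper, whose entire proof is the one-line observation that $\Or^\epsilon\cap\Omega_H$ is a tubular neighbourhood of the $WH$-orbit $\Or\cap\Omega_H$ in $\Omega_H$ on which $f(x+v)=v$. You simply spell out the details the paper leaves implicit (the identification of the normal bundle inside $V^H$ via the decomposition from Proposition \ref{prop:onormal}, and the commutation of gradient with restriction to the fixed-point set), all of which check out.
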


\begin{proof}
The assertion follows from the observation that
\[
\{x+v\mid x\in\Or\cap\Omega_H, v\in V^H, \abs{v}<\epsilon\}=
\Or^\epsilon\cap\Omega_H
\]
is a tubular neighbourhood  of the $WH$-orbit $\Or\cap\Omega_H$,
in which $f(x+v)=v$.
\end{proof}

The next result describes what happens when
we divide out the free action in an orbit-normal map.

\begin{prop}\label{prop:source}
If $G$ acts freely on $\Omega$ and 
$f=\nabla\varphi\in\mathcal{F}_G^\nabla(\Omega)$
is orbit-normal around $\Or$ then $\Or/G\in\Omega/G$
is a source for 
$\nabla\wt\varphi\in\mathcal{F}^\nabla(\Omega/G)$.
\end{prop}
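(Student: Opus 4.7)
The plan is to identify a neighborhood of $p := \Or/G$ in $M = \Omega/G$ with an explicit chart in which $\nabla\wt\varphi$ takes the standard radial form $v \mapsto v$, so that $p$ is a source in the usual local-model sense. The first step is to describe $\varphi$ explicitly on the tubular neighborhood $\Or^\epsilon$. Because $\nabla\varphi(x+v) = v$ along each normal fibre, and because $\varphi$ is $G$-invariant (hence constant on the single orbit $\Or$, say equal to $c_0$), a radial integration yields
\[
\varphi(x+v) = c_0 + \tfrac{1}{2}\abs{v}^2, \qquad x\in\Or,\ v\in (T_x\Or)^\bot,\ \abs{v}<\epsilon.
\]

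Next I would pass to the quotient via a slice chart. Fix $x_0\in\Or$ and set $N := (T_{x_0}\Or)^\bot$. Because $G$ acts freely and orthogonally, for $\epsilon$ small the affine slice $S := \{x_0+v\mid v\in N,\ \abs{v}<\epsilon\}$ meets each nearby $G$-orbit transversally in exactly one point, so the quotient projection restricts to a diffeomorphism $\pi|_S : S \to U$ onto an open neighborhood $U$ of $p$. Using this as a chart, with coordinate $v\in N$ at the point $\pi(x_0+v)$, the key computation is that $\nabla\wt\varphi$ equals the radial field $v\mapsto v$. For this I would combine two ingredients: (i) $G$-invariance of $\varphi$ forces $\nabla\varphi$ to be orthogonal to every $G$-orbit and hence horizontal for the Riemannian submersion $\pi$; and (ii) the standard submersion identity $\nabla\wt\varphi\circ\pi = d\pi\circ\nabla\varphi$ holds whenever $\nabla\varphi$ is horizontal. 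Applied at $y = x_0+v$, where $\nabla\varphi(y) = v \in N = T_yS$, these give
\[
\nabla\wt\varphi(\pi(x_0+v)) = d\pi_{x_0+v}(v) = d(\pi|_S)_{x_0+v}(v),
\]
so transporting back through the chart $\pi|_S$ recovers exactly $v$ at the point with coordinate $v$. Hence $\nabla\wt\varphi$ is the radial field $v\mapsto v$ on $U$, vanishing only at $p$, which is the local model of a source.

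The main subtlety to watch is that the affine slice $S$ is not a horizontal submanifold of $\Omega$ at points $x_0+v$ with $v\neq 0$ (the horizontal distribution bends with $v$), so $\pi|_S$ is not an isometry in general and one cannot naively identify slice directions with horizontal directions. What rescues the argument is precisely the horizontality of $\nabla\varphi$ coming from $G$-invariance: although a generic tangent vector to $S$ need not be horizontal, the gradient $\nabla\varphi$ itself automatically lies in the horizontal distribution, and this is exactly the input that the submersion identity requires. With that in hand, the remaining manipulation is formal, and the source property follows immediately from the explicit form of $\varphi$ (alternatively, from the fact that $\wt\varphi = c_0 + \tfrac12 d_M(p,\cdot)^2$ near $p$, which is a strict non-degenerate local minimum).
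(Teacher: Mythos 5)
Your argument is correct and takes essentially the same route as the paper's proof: identify a neighbourhood of $p=\Or/G$ with the normal slice $\{v\in(T_{x_0}\Or)^\bot\mid\abs{v}<\epsilon\}$ and check that in this chart $\nabla\wt\varphi$ is the radial field $v\mapsto v$. The paper asserts this identification in one line; your justification via horizontality of $\nabla\varphi$ (from $G$-invariance) and the Riemannian-submersion identity supplies exactly the detail it leaves implicit.
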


\begin{proof}
Let $p=\Or/G\in\Omega/G=M$. Choose $x\in\Or$.
We can identify some neighbourhood $U$ of $p$ in $M$
with the set $\{v\in(T_x\Or)^\bot\mid\abs{v}<\epsilon\}$
and for $v\in U$ the tangent space $T_vM$ with $(T_x\Or)^\bot$.
Since $\nabla\varphi(x+v)=v$, for $v\in U$ we have
$\nabla\wt\varphi(v)=v\in T_v M$. Hence $p$ is a source for 
$\nabla\wt\varphi$.
\end{proof}

%%%%%%%%%%%%%%%%%%%%%%%%%%%%%  Sec. Main  %%%%%%%%%%%%%%%%%%%%%%%%%%%%%%%
\section{Main results}\label{sec:main}

We can now formulate main results of our paper.
Theorem \ref{thm:phi} will be proved in the next section.

\begin{thm}\label{thm:phi}
The function
\[
\Phi\colon
\mathcal{F}_G^\nabla[\Omega]\to
\prod_{i=1}^m
\mathcal{F}_{WH_i}^\nabla\left[\Omega_{H_i}\right],
\]
where the product is taken over $\Iso(\Omega)$,
is a bijection. Moreover,
if $f$ is $(H_j)$-normal then 
\[
\Phi_i([f])
=\begin{cases}
[\emptyset]& \text{if $i\neq j$},\\
[f_{H_j}]& \text{if $i=j$},
\end{cases}
\]
where $f_{H_j}=\restrictionmap{f}{D_f\cap\Omega_{H_j}}$.
\end{thm}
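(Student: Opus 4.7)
\emph{Auxiliary statement on $(H_j)$-normal maps.} I would fix $f\in\mathcal{F}_G^\nabla(\Omega)$ that is $(H_j)$-normal on some tube $U^\epsilon\subset\Omega_{(H_j)}$ and argue by induction on $i$ that, for $i\le j$, the class $\Xi_{i-1}([f])$ still admits an $(H_j)$-normal representative supported in (a restriction of) the original tube. The crucial combinatorial point is that the enumeration $(H_k)\le(H_\ell)\Rightarrow\ell\le k$ forces $(H_i)\not\le(H_j)$ whenever $i<j$, and the slice theorem then yields $U^\epsilon\cap\Omega_{(H_i)}=\emptyset$, because points in a slice tube around an $(H_j)$-orbit have isotropy types $\le(H_j)$. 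Consequently one may choose $U_i=\emptyset$ in Subsection~\ref{subsec:def}, making the $i$-th perturbation trivial, so that $\wh\Phi'_i([f])=[\restrictionmap{f}{D_f\cap\Omega_{H_i}}]=[\emptyset]$ and $\Xi_i([f])$ retains an $(H_j)$-normal representative. For $i=j$ the type $(H_j)$ is maximal in $\Omega_j$, the restriction equals $f_{H_j}$, and Proposition~\ref{prop:normal} gives $\Xi_j([f])=[\emptyset]$; hence $\Phi_i([f])=\wh\Phi'_i([\emptyset])=[\emptyset]$ for all $i>j$.

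\emph{Bijectivity.} I would proceed by induction on $m=|\Iso(\Omega)|$. Unfolding the inductive definition of $\Phi$ gives the factorisation
\[
\Phi=(\id\times\Phi^{(2)})\circ\wh\Phi_1,
\]
where $\Phi^{(2)}=(\Phi_2,\dotsc,\Phi_m)$ is the analogue of $\Phi$ built on $\Omega_2=\Omega\setminus\Omega_{(H_1)}$, which contains $m-1$ orbit types. By the inductive hypothesis $\Phi^{(2)}$ is bijective, so it suffices to prove that $\wh\Phi_1$ is bijective. The base case $m=1$ (where $\Omega=\Omega_{(H_1)}$ and $\wh\Phi_1$ reduces to restriction to $\Omega_{H_1}$) follows from the standard identification of $G$-equivariant gradient local maps on the single-orbit-type space $\Omega_{(H_1)}\cong G\times_{NH_1}\Omega_{H_1}$ with $WH_1$-equivariant gradient local maps on $\Omega_{H_1}$.

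For the inductive step I would exhibit an explicit inverse $\Psi_1$ of $\wh\Phi_1$. Given $([g],[g'])$, pick $g=\nabla\varphi\in\mathcal{F}_{WH_1}^\nabla(\Omega_{H_1})$ with $g^{-1}(0)\subset U\Subset\Omega_{H_1}$, and define the $(H_1)$-normal extension $\wt g:=\nabla\wt\varphi$ on a $G$-invariant tubular neighbourhood of $G\cdot U\subset\Omega_{(H_1)}$ by the formula $\wt\varphi(x+v):=\varphi(x)+\frac12|v|^2$ (cf.\ Definition~\ref{defn:normal}); set $\Psi_1([g],[g']):=[\wt g\sqcup g']$, arranging disjoint domains. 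The identity $\wh\Phi_1\circ\Psi_1=\id$ then reduces, on the first coordinate, to $\wt g_{H_1}=g$, and on the second to Proposition~\ref{prop:normal} (applied to the $(H_1)$-normal piece $\wt g$) together with the triviality of the perturbation when $U_1=\emptyset$ (as for $g'$). Conversely, the Remark in Section~\ref{sec:phi} writes $[f]=[f^n\sqcup f^a]$ with $f^n$ $(H_1)$-normal and $[f^a]=[f^c]$ in $\mathcal{F}_G^\nabla[\Omega_2]$; combined with the uniqueness of $(H_1)$-normal extensions up to gradient otopy, this identifies $[f]$ with $\Psi_1(\wh\Phi_1([f]))$, giving $\Psi_1\circ\wh\Phi_1=\id$.

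\emph{Principal obstacle.} The hardest step is verifying that the $(H_1)$-normal extension $\wt g$ is well-defined on otopy classes: one must show that $[\wt g]\in\mathcal{F}_G^\nabla[\Omega]$ is independent of the chosen tube $(U,\epsilon)$, of the potential $\varphi$, and of the otopy representative of $[g]$, and, moreover, that two $(H_1)$-normal extensions of otopic data are themselves gradient-otopic through a $G$-invariant family of potentials of the form $\varphi_t\circ\pi+\frac12|v|^2$. A parallel technical verification --- namely, the well-definedness of $\wh\Phi_1$ with respect to $(U_1,\epsilon_1)$ and the representative $f$, which is the content deferred to Section~\ref{sec:proof} --- must precede the inversion argument, since both directions of $\Psi_1\circ\wh\Phi_1=\id=\wh\Phi_1\circ\Psi_1$ rely on it.
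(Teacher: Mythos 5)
Your proposal is correct and follows essentially the same route as the paper: the inverse $\Psi_1$ you construct is exactly the paper's $\wt{\Phi}\left([k],[l]\right)=\left[k^{U,\epsilon}\sqcup l^{\cl\left(U^\epsilon\right)}\right]$, the induction over orbit types matches the paper's inductive bijectivity argument for $(\Phi_1,\dotsc,\Phi_i,\Xi_i)$, and the normalization property is handled the same way (trivial perturbation with $U_i=\emptyset$ for $i<j$, Proposition~\ref{prop:normal} for $i>j$). The ``principal obstacle'' you flag is precisely the content of the paper's well-definedness lemmas for $\wh{\Phi}$ and $\wt{\Phi}$, and the paper avoids your appeal to ``uniqueness of normal extensions'' by choosing radii so that $\left(f_H\right)^{U,\epsilon/3}$ literally equals $f^n_{U,\epsilon}$.
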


\begin{main}
Assume that $0\not\in\Omega$ or $\dim V^{G}>0$.
Then the function
\[
\Theta\colon
\mathcal{F}_G^\nabla[\Omega]\to
\prod_{i=1}^m\biggl(\sum_{j}\mathbb{Z}\biggr),
\]
where the product is taken over $\Iso(\Omega)$
and the respective direct sums are indexed by
either finite or countably infinite
sets of connected components
of the quotients $\Omega_{H_i}/WH_i$,
is a bijection. Moreover
\begin{enumerate}
\item $\Theta([f\sqcup g])=\Theta([f])+\Theta([g])$
      for $f,g\in\mathcal{F}_G^\nabla(\Omega)$ such that
			$D_f\cap D_g=\emptyset$,
\item $\Theta([\emptyset])=0$,	
\item if $\Theta([f])\neq0$ then there is $x\in D_f$
      such that $f(x)=0$,		
\item if $f$ is orbit-normal
      around $\Or\subset\Omega_{(H_k)}$
			and 
			$p_k\left(\Or\cap\Omega_{H_k}\right)\in(\Omega_{H_k}/WH_k)_l$,
			where $p_{k}\colon\Omega_{H_k}\to\Omega_{H_k}/WH_k$ denotes 
            the quotient map and $(\Omega_{H_k}/WH_k)_l$ 
            the respective component,
			then
      \[\Theta_{ij}([f])=\begin{cases}
        1& \text{if $i=k$ and $j=l$},\\
        0& \text{otherwise}.
        \end{cases}
      \]
\end{enumerate}
\end{main}

\begin{proof}
First we show that $\Theta$ is a bijection.
Define 
\[
\Psi=\prod_{i=1}^m\Psi_i\colon\prod_{i=1}^m\Psi_i
\mathcal{F}_{WH_i}^\nabla\left[\Omega_{H_i}\right]\to
\prod_{i=1}^m\mathcal{F}^\nabla\left[\Omega_{H_i}/WH_i\right]
\]
and
\[
\pi=\sum_{i,j}\pi_{ij}\colon
\prod_{i=1}^m\mathcal{F}^\nabla\left[\Omega_{H_i}/WH_i\right]\to
\prod_{i=1}^m\sum_j\mathcal{F}^\nabla\left[(\Omega_{H_i}/WH_i)_j\right],
\]
where for each $i$ the index $j$ runs over the set of connected 
components of $\Omega_{H_i}/WH_i$.
Observe that $\Psi$ and $\pi$ are bijections 
(the first from Theorem \ref{thm:psi} and the second by definition).
Let 
$\I_{ij}\colon\mathcal{F}^\nabla\left[(\Omega_{H_i}/WH_i)_j\right]\to\Z$
denote the intersection number restricted to the respective component.
By Remark \ref{rem:gradient}, $\I_{ij}$ is a bijection and, in consequence,
so is 
$
\I=\sum_{i,j}\I_{ij}\colon
\prod_{i=1}^m\sum_j\mathcal{F}^\nabla\left[(\Omega_{H_i}/WH_i)_j\right]
\to\prod_{i=1}^m\sum_j\Z
$. 
Since, by Theorem \ref{thm:phi}, $\Phi$ is a bijection, we obtain that
the composition $\Theta=\I\circ\,\pi\circ\Psi\circ\Phi$
is also a bijection.

Next we prove the additivity property (1).
Following the notation from Remark \ref{rem:defin}
we obtain the sequences 
$f_i, g_i, (f\sqcup g)_i\in\mathcal{F}_G^\nabla(\Omega_i)$ and 
$f_i', g_i', (f\sqcup g)_i'=f_i'\sqcup g_i'
\in\mathcal{F}_{WH_i}^\nabla\left(\Omega_{H_i}\right)$.
Since $\Phi_i([f\sqcup g])=\left[f_i'\sqcup g_i'\right]$, 
by Corollary \ref{cor:psi} we have
\begin{multline*}
\Theta_{ij}([f\sqcup g])=
\I\circ\,\pi_{ij}\circ\Psi_i\left(\left[f_i'\sqcup g_i'\right]\right)=\\
\I\circ\,\pi_{ij}\circ\Psi_i\left(\left[f_i'\right]\right)+
\I\circ\,\pi_{ij}\circ\Psi_i\left(\left[g_i'\right]\right)=
\Theta_{ij}([f])+\Theta_{ij}([g]).
\end{multline*}

The property (2) follows from (1) as well as from the direct construction.

To prove (3) observe that if $f^{-1}(0)=\emptyset$ then
$f$ is otopic to the empty map and hence
$\Theta_{ij}([f])=\Theta_{ij}([\emptyset])=0$.
 
Finally, we show the normalization property (4).
By Proposition \ref{prop:onormal}, $f$ is $(H_k)$-normal.
First consider the case $i=k$. Note that
$\Phi_k([f])=\left[f_{H_k}\right]$ and
$\Psi_k(\left[f_{H_k}\right])=
\Psi_k(\left[\nabla\varphi\right])=\left[\nabla\wt\varphi_{k}\right]$.
By Propositions \ref{prop:free} and \ref{prop:source},
the only zero of $\nabla\wt\varphi_{k}$ is a source
and, by assumption, it is contained in $(\Omega_{H_k}/WH_k)_l$. 
Consequently,
\[
\Theta_{kj}([f])=
\I\circ\,\pi_{kj}\circ\Psi_k\left(\left[f_{H_k}\right]\right)=
\I\circ\,\pi_{kj}\left(\left[\nabla\wt\varphi_{k}\right]\right)=
\begin{cases}
1&\text{for $k=l$,}\\
0&\text{for $k\neq l$.}
\end{cases}
\]
In turn, for $i\neq k$, $\Phi_i([f])=\left[\emptyset\right]$ and,
in consequence, $\Theta_{ij}([f])=0$. 
\end{proof}	

\begin{rem}\label{rem:opposite}
Now consider the case $0\in\Omega$ and $\dim V^G=0$.
In that situation $H_1=G$, $V^{H_1}=\Omega_{H_1}=\{0\}$,
and $WH_1$ is trivial.
Since
\[
\Psi_1\circ\Phi_1\colon
\mathcal{F}_G^\nabla\left[\Omega\right]\to
\mathcal{F}^\nabla\left[\Omega_{H_1}/WH_1\right]=
\mathcal{F}^\nabla\left[\{0\}\right]=\{\emptyset,0\},
\]
we can express $\Theta$ the in following form
\[
\Theta\colon
\mathcal{F}_G^\nabla[\Omega]\to
\{0,1\}\times
\prod_{i=2}^m\sum_{j}\mathbb{Z},
\]
where $\Theta_{11}\colon\mathcal{F}_G^\nabla[\Omega]\to\{0,1\}$ and
\[
\Theta_{11}([f])=
\begin{cases}
1&\text{for $0\in D_f$,}\\
0&\text{for $0\not\in D_f$.}
\end{cases}
\]
Main Theorem holds as well in the above case.
Regarding the additivity property
let us mention that all $\Z$ have complete
additive structure, but the addition $1+1$ in the set $\{0,1\}$
makes no sense. Nevertheless, when $D_f\cap D_g=\emptyset$ 
then either $\Theta_{11}([f])$ or $\Theta_{11}([g])$ 
is equal to $0$ and therefore the condition (1) makes no problem.
\end{rem}

%%%%%%%%%%%%%%%%%%%%%%%%%%%%%  Sec. Proof  %%%%%%%%%%%%%%%%%%%%%%%%%%%%%%%
\section{Proof of Theorem  \texorpdfstring{\ref{thm:phi}}{7.1}}\label{sec:proof}

This section contains the proof of Theorem \ref{thm:phi}
preceded by a series of lemmas and notations.
Let us assume that $(H)$ is maximal in  $\Iso(\Omega)$.
Recall that below $\sim$ denotes the relation of gradient otopy.

\begin{lem}
$\wh{\Phi}$ is well-defined.
\end{lem}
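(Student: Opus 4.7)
To show that $\wh\Phi$ is well defined I must verify three things: (a) for a representative $f$ and admissible $(U,\epsilon)$, the maps $f_H$ and $f^c_{U,\epsilon}$ lie in the target spaces $\mathcal{F}_{WH}^\nabla(\Omega_H)$ and $\mathcal{F}_G^\nabla(\Omega\setminus\Omega_{(H)})$; (b) the pair of resulting otopy classes does not depend on the choice of $(U,\epsilon)$; (c) it does not depend on the representative $f$.

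For (a), since $(H)$ is maximal in $\Iso(\Omega)$, the stratum $\Omega_{(H)}$ is closed in $\Omega$, and hence so is $\Omega_H=\Omega_{(H)}\cap V^H$. Thus $f_H^{-1}(0)=f^{-1}(0)\cap\Omega_H$ is a closed subset of the compact set $f^{-1}(0)$, so it is compact in $D_f\cap\Omega_H$. Equivariance of $f$ forces $f(V^H)\subset V^H$, and the gradient of an invariant potential on $V$ restricts to the gradient of its restriction to $V^H$ (since $V^H$ is a linear subspace carrying the induced inner product), so $f_H\in\mathcal{F}_{WH}^\nabla(\Omega_H)$. For $f^c_{U,\epsilon}$, the zero-set analysis in the proof of Proposition \ref{prop:perturb} shows that the zeros of $\nabla\varphi_1$ off $\Omega_{(H)}$ form a compact subset of $D_{f_{U,\epsilon}}\setminus\Omega_{(H)}$, so $f^c_{U,\epsilon}\in\mathcal{F}_G^\nabla(\Omega\setminus\Omega_{(H)})$.

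For (b), the first coordinate $[f_H]$ is manifestly independent of $(U,\epsilon)$. For the second, given two admissible pairs $(U_0,\epsilon_0)$ and $(U_1,\epsilon_1)$, one first reduces to a common admissible pair contained in their intersection by concatenating otopies of the form $\nabla\varphi_t$ from Proposition \ref{prop:perturb}, then varies $(U,\epsilon)$ monotonically through admissible values to obtain a continuous family of potentials whose gradients have zero sets contained in a common compact subset of $\Omega\setminus\Omega_{(H)}$. This assembles into the required otopy between $f^c_{U_0,\epsilon_0}$ and $f^c_{U_1,\epsilon_1}$. For (c), given a gradient otopy $h\colon\Lambda\to V$ from $f_0$ to $f_1$, maximality of $(H)$ makes $I\times\Omega_H$ closed in $I\times\Omega$, so the restriction of $h$ to $\Lambda\cap(I\times\Omega_H)$ is a gradient otopy between $(f_0)_H$ and $(f_1)_H$. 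For the second coordinate one applies the construction of Section \ref{subsec:def} parametrically in $t\in I$, with $U$ and $\epsilon$ chosen uniformly in $t$ along the whole otopy.

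The main obstacle is step (c), specifically the parametric version of the perturbation construction: one must perform the construction of Section \ref{subsec:def} uniformly in $t\in I$ while preserving the gradient character, equivariance, and compactness of the zero set. This requires a uniform-in-$t$ form of the estimates behind Proposition \ref{prop:perturb}, and uses compactness of both $I$ and $h^{-1}(0)$ in $\Lambda$ to glue local choices of $(U,\epsilon)$ into globally valid ones.
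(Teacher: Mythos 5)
Your overall strategy matches the paper's: restrict the otopy to $I\times\Omega_H$ for the first coordinate, perturb the otopy parametrically in $t$ for the second, and separately establish independence of $(U,\epsilon)$. However, two steps are defective as written. In (c) you propose to choose $U$ ``uniformly in $t$ along the whole otopy.'' This is impossible in general: the domain $\Lambda_t$ of $h_t$ varies with $t$, so there need be no single open invariant $U$ with $h_t^{-1}(0)\cap\Omega_{(H)}\subset U$ and $\cl U\Subset\Lambda_t\cap\Omega_{(H)}$ for all $t$ simultaneously. The paper's construction instead takes an open invariant $W\subset I\times\Omega_{(H)}$ with $h^{-1}(0)\cap(I\times\Omega_{(H)})\subset W\subset\cl W\Subset\Lambda\cap(I\times\Omega_{(H)})$ and perturbs each slice $h_t$ using the $t$-dependent set $W_t$; only $\epsilon$ is chosen uniformly. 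Your closing sentence about gluing local choices gestures at this, but the parametric perturbation $h_{W,\epsilon}$ is precisely the content of the lemma and cannot be left as an acknowledged obstacle. Note also that this step ends at $f^c_{W_0,\epsilon}$ and $g^c_{W_1,\epsilon}$, so independence of the choice of $U$ must be invoked once more to identify these with $\wh{\Phi}''([f])$ and $\wh{\Phi}''([g])$ as originally defined.

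In (b), the ``monotone variation of $(U,\epsilon)$ through admissible values'' is not substantiated, and a naive deformation risks passing through maps whose zero sets off $\Omega_{(H)}$ are non-compact (exactly the problem that the perturbation of Proposition \ref{prop:perturb} is designed to avoid, and which already occurs for $f$ itself at the start of that deformation). The paper's argument is more elementary and avoids this: after reducing to nested $U\subset V$ (via $U\cap V$), one restricts both perturbed maps to the common open set $A=D_f\setminus\bigl(\Omega_{(H)}\cup B^{\epsilon}_U\cup B^{\epsilon}_V\bigr)$, which still contains their zero sets, connects the two restrictions there, and uses that shrinking the domain to an open set containing the zero set does not change the otopy class. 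Your point (a), on the other hand, is correct and is a worthwhile observation that the paper leaves implicit.
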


\begin{proof}
Observe that the definition of $\wh\Phi$ does not depend on the choice
of $U$ and $\epsilon$ if only they satisfy the condition \eqref{eq:def}.
Since for the fixed $U$ the definition
of $\wh\Phi$ does not depend on the choice of $\epsilon$,
it remains to check that it
does not depend on the choice of $U$ if $\epsilon$ is fixed.
Let $\varphi_1$ and $\varphi_1'$ be potentials from the definition
of $\wh\Phi$ corresponding to $U$ and $V$.
We can assume that $U\subset V$ because otherwise
we can pass from $U$ and $V$ through $U\cap V$.
Our assertion follows from the observation that
\[
\restrictionmap{\nabla\varphi_1'}{D_{\varphi_1'}\setminus\Omega_{(H)}}
\sim
\restrictionmap{\nabla\varphi_1'}{A}
\sim
\restrictionmap{\nabla\varphi_1}{A}
\sim
\restrictionmap{\nabla\varphi_1}{D_{\varphi_1}\setminus\Omega_{(H)}}
\]
where 
$A=
D_f\setminus
\left(\Omega_{(H)}\cup B^{\epsilon}_U\cup B^{\epsilon}_V\right)$.

Now we will show that
if $[f]=[g]$ in 
$\mathcal{F}^\nabla_G[\Omega]$
then 
\begin{enumerate}
\item $\wh{\Phi}'([f])=[f_H]=[g_H]=\wh{\Phi}'([g])$ in
      $\mathcal{F}^\nabla_{WH}\left[\Omega_H\right]$,
\item $\wh{\Phi}''([f])=\wh{\Phi}''([g])$  
      in $\mathcal{F}^\nabla_G\left[\Omega\setminus\Omega_{(H)}\right]$.
\end{enumerate}
By assumption, there is an otopy 
$h\colon\Lambda\subset I\times\Omega\to V$
such that $f=h_0$ and $g=h_1$.
The proof of (1) is straightforward.
Namely, let $A=\Lambda\cap(I\times\Omega_H)$
and $k=\restrictionmap{h}{A}$.
Then $k\colon A\subset I\times\Omega_H\to V^H$
is an otopy such that $k_0=f_H$ and $k_1=g_H$,
and (1) is proved. 
To show (2) we will perturb the otopy $h$
treating every section $h_t(\cdot)=h(t,\cdot)$ analogously to 
the perturbation $f_{U,\epsilon}$ of the map $f$.
Let $W$ be an open and invariant subset of 
$I\times\Omega_{(H)}$ such that
\[
h^{-1}(0)\cap\left(I\times \Omega_{(H)}\right)\subset
W\subset\cl W\Subset\Lambda\cap\left(I\times\Omega_{(H)}\right).
\]
Let $B=\bd W$. For $\epsilon>0$ let us define the sets
\begin{align*}
W^\epsilon=&\{(t,x+v)\mid (t,x)\in W, v\in N_x, \abs{v}<\epsilon\},\\
B^\epsilon=&\{(t,x+v)\mid (t,x)\in B, v\in N_x, \abs{v}\le\epsilon\}.
\end{align*}
Observe that for $\epsilon$ sufficiently small
we have 
$W^{\epsilon}\subset\Lambda$,
$\cl(W^{\epsilon})$ is contained in some tubular neighbourhood 
of $I\times\Omega_{(H)}$ and 
$h^{-1}(0)\cap B^{\epsilon}=\emptyset$.
Recall that for $X\subset I\times\Omega$
we denote by $X_t$ the set 
$\{x\in\Omega\mid(t,x)\in A\}$.
Define the map
\[
h_{W,\epsilon}\colon\Lambda\setminus B^{\epsilon}\to V
\]
by the formula
\[
h_{W,\epsilon}(z)=\begin{cases}
\left(h_t\right)_{W_t,\epsilon}(x)&
\text{if $z=(t,x)\in W^{\epsilon}$},\\
h(z)&
\text{if $z\in\Lambda\setminus(W^{\epsilon}\cup 
B^{\epsilon})$}.
\end{cases}
\]
In the above formula we use notation of perturbation
introduced in Subsection \ref{subsec:def}.
Set
\[
h^c_{W,\epsilon}=
\restrictionmap{h_{W,\epsilon}}
{\Lambda\setminus\left(B^{\epsilon}\cup\left(I\times\Omega_{(H)}\right)\right)}.
\]
Since $h_{W,\epsilon}$ is an otopy 
in $\mathcal{F}^\nabla_{G}\left(\Omega\right)$
and $\left(h^{c}_{W,\epsilon}\right)^{-1}(0)$ is compact,
$h^c_{W,\epsilon}$ is an otopy in
$\mathcal{F}^\nabla_G\left(\Omega\setminus\Omega_{(H)}\right)$ 
connecting 
$\restrictionmap{f^c_{W_0,\epsilon}}{D_f\setminus B_0^{\epsilon}}$ and 
$\restrictionmap{g^c_{W_1,\epsilon}}{D_g\setminus B_1^{\epsilon}}$.
Consequently,
\[
\left[f^c_{W_0,\epsilon}\right]=
\left[\restrictionmap{f^c_{W_0,\epsilon}}{D_f\setminus B_0^{\epsilon}}\right]=
\left[\restrictionmap{g^c_{W_1,\epsilon}}{D_g\setminus B_1^{\epsilon}}\right]=
\left[g^c_{W_1,\epsilon}\right]\quad
\text{in $\mathcal{F}^\nabla_{G}\left[\Omega\setminus\Omega_{(H)}\right]$},
\]
which gives $\wh{\Phi}''([f])=\wh{\Phi}''([g])$ and (2) is proved.
\end{proof}

The following two constructions will be needed
in the proof of Lemma \ref{lem:bij}.

Assume $k=\nabla\varphi\in\mathcal{F}_{WH}^\nabla\left(\Omega_H\right)$.
Let $\varphi_G\colon GD_k\to\R$ be given by $\varphi_G(gx)=\varphi(x)$.
Let $U$ be an open bounded invariant subset of $GD_k$ such that
$k^{-1}(0)\subset U$. Define the function
$\wt\varphi\colon U^\epsilon\to\R$ by
$\wt\varphi(x+v)=\varphi_G(x)+\frac12\abs{v}^2$ for $x+v\in U^\epsilon$.
Set 
\[
k^{U,\epsilon}=\nabla\wt\varphi.
\]
For $l\in\mathcal{F}_G^\nabla\left(\Omega\setminus\Omega_{(H)}\right)$
and $Y\subset\Omega$ closed invariant such that
$l^{-1}(0)\cap Y=\emptyset$ we define
\[
l^{Y}=
\restrictionmap{l}{D_l\setminus Y}.
\]

In the following proposition we use the notation
introduced in Subsection \ref{subsec:def}.

\begin{prop}\label{prop:constr}
The maps $k^{U,\epsilon}\in\mathcal{F}_G^\nabla\left(\Omega\right)$
and 
$l^{Y}\in\mathcal{F}_G^\nabla\left(\Omega\setminus\Omega_{(H)}\right)$
have the following properties:
\begin{enumerate}
\item $k^{U,\epsilon}$ is $(H)$-normal,\vspace{0.5mm}
\item in $\mathcal{F}_G^\nabla\left[\Omega\setminus\Omega_{(H)}\right]$ 
      we have\vspace{0.5mm}
\begin{enumerate}
\item $\left[\left(k^{U,\epsilon}\right)^c_{U,\epsilon}\right]=\left[\emptyset\right]$,\vspace{0.5mm}
\item $\left[\left(k^{U,\epsilon}\sqcup l^{\cl\left(U^\epsilon\right)}\right)^c_{U,\epsilon}\right]
      =\left[l^{\cl\left(U^\epsilon\right)}\right]$,\vspace{0.5mm}
\item $\left[l^{\cl\left(U^\epsilon\right)}\right]=\left[l\right]$,
\end{enumerate}\vspace{0.5mm}
\item $\left(f^c_{U,\epsilon}\right)^{\cl\left(U^{\epsilon/3}\right)}=f^a_{U,\epsilon}$,\vspace{0.5mm}
\item $\left(f_H\right)^{U,\epsilon/3}=\restrictionmap{f_{U,\epsilon}}{U^{\epsilon/3}}=f^n_{U,\epsilon}$.
\end{enumerate}
\end{prop}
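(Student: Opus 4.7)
The four assertions are mostly direct unwindings of the definitions from Section~\ref{sec:phi}, and I would prove them in the order (1), (4), (3), (2a), (2b), (2c).

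For (1), it suffices to verify the three bullets of Definition~\ref{defn:normal} for $k^{U,\epsilon}=\nabla\wt\varphi$: the formula $\wt\varphi(x+v)=\varphi_G(x)+\frac12|v|^2$ on $U^\epsilon$ matches the third bullet verbatim, the inclusion $U^\epsilon\subset D_{k^{U,\epsilon}}$ is built in, and the zero-set condition holds because the normal component of $\nabla\wt\varphi$ at $x+v$ equals $v$, so zeros force $v=0$ and hence lie in $U$. For (4), I would compare the two potentials on $U^{\epsilon/3}$: there $r_1(x+v)=x$ and $\omega(|v|)=\frac12|v|^2-\frac19\epsilon^2$, so the potential $\varphi_1$ defining $f_{U,\epsilon}$ equals $\varphi(x)+\frac12|v|^2-\frac19\epsilon^2$, whereas the potential of $(f_H)^{U,\epsilon/3}$ is $\varphi(x)+\frac12|v|^2$. (The latter uses $\varphi_H=\restrictionmap{\varphi}{D_f\cap\Omega_H}$, which in turn follows because $G$-equivariance forces $f(x)\in V^H$ at $x\in\Omega_H$, so the ambient and intrinsic gradients agree on $\Omega_H$.) The two potentials differ by a constant, so their gradients coincide; the remaining equality $\restrictionmap{f_{U,\epsilon}}{U^{\epsilon/3}}=f^n_{U,\epsilon}$ is the definition of $f^n_{U,\epsilon}$.

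For (3), both sides are by definition the restriction of $f_{U,\epsilon}$ to $D_{f_{U,\epsilon}}\setminus\bigl(\Omega_{(H)}\cup\cl(U^{\epsilon/3})\bigr)$. The only point to check is that $(\cdot)^{\cl(U^{\epsilon/3})}$ is well defined on $f^c_{U,\epsilon}$, i.e.\ $(f^c_{U,\epsilon})^{-1}(0)\cap\cl(U^{\epsilon/3})=\emptyset$; this follows from the same gradient computation as in (1), since on $U^{\epsilon/3}\setminus\Omega_{(H)}$ the normal component of $f_{U,\epsilon}$ at $x+v$ equals $v\neq0$. Part (2a) is then immediate from (1) together with Proposition~\ref{prop:normal}. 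For (2b), I would first verify that the perturbation commutes with disjoint union: since $\varphi_1$ modifies the potential only on $U^\epsilon$ and $D_{l^{\cl(U^\epsilon)}}$ is disjoint from $\cl(U^\epsilon)$, one has $\bigl(k^{U,\epsilon}\sqcup l^{\cl(U^\epsilon)}\bigr)_{U,\epsilon}=\bigl(k^{U,\epsilon}\bigr)_{U,\epsilon}\sqcup l^{\cl(U^\epsilon)}$, and the subsequent $c$-restriction touches only the first summand since $D_{l^{\cl(U^\epsilon)}}\subset\Omega\setminus\Omega_{(H)}$ already. By (2a) the first summand is gradient otopic to the empty map via the otopy supplied by Proposition~\ref{prop:normal}, whose domain is contained in $U^\epsilon$; extending it by the constant otopy on $l^{\cl(U^\epsilon)}$ yields the required gradient otopy.

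Finally, for (2c) I plan to write down an explicit \emph{domain-enlarging} otopy. Setting $l=\nabla\varphi$ and $Y=\cl(U^\epsilon)$, the set $\Lambda:=(I\times D_l)\setminus(\{0\}\times Y)$ is open in $I\times(\Omega\setminus\Omega_{(H)})$ with $\Lambda_0=D_l\setminus Y$ and $\Lambda_t=D_l$ for $t>0$; the map $h(t,z):=l(z)$ on $\Lambda$ has each section equal to a restriction of $\nabla\varphi$, hence is sectionwise gradient, and $h^{-1}(0)=I\times l^{-1}(0)$ is compact because $l^{-1}(0)\cap Y=\emptyset$ by hypothesis. Thus $h$ exhibits $l^Y$ and $l$ as gradient otopic. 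The main obstacle I anticipate is (2b), where one must carefully track how both the perturbation operation and the $c$-restriction distribute over the disjoint union, and verify that the otopy from Proposition~\ref{prop:normal} can be prolonged by the identity on $l^{\cl(U^\epsilon)}$ without disturbing compactness of the zero set.
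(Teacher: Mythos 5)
Your argument is correct and follows the same route as the paper: the paper's own proof consists of exactly your step for (2a) (deduce it from (1) and Proposition \ref{prop:normal}) and declares the remaining items obvious, which your careful unwinding of the definitions — the constant discrepancy $\tfrac19\epsilon^2$ between the potentials in (4), the splitting of the perturbation over the disjoint union in (2b), and the domain-enlarging otopy in (2c) — confirms. No gaps.
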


\begin{proof}
Property (2a) follows from (1) and Proposition \ref{prop:normal}.
All other properties are obvious.
\end{proof}

Let us define the function
\[
\wt{\Phi}\colon
\mathcal{F}_{WH}^\nabla\left[\Omega_H\right]\times
\mathcal{F}_G^\nabla\left[\Omega\setminus\Omega_{(H)}\right]\to
\mathcal{F}_G^\nabla[\Omega]
\]
by the formula
\[
\wt{\Phi}\left([k],[l]\right)=
\left[k^{U,\epsilon}\sqcup l^{\cl\left(U^\epsilon\right)}\right].
\]

It will turn out that $\wt{\Phi}$ is inverse to $\wh{\Phi}$,
which will imply that $\wh{\Phi}$ is a bijection.

\begin{lem}
$\wt{\Phi}$ is well-defined.
\end{lem}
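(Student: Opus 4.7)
The plan is to verify that the right-hand side $\left[k^{U,\epsilon}\sqcup l^{\cl(U^\epsilon)}\right]$ depends only on the gradient otopy classes $[k]$ and $[l]$, not on the auxiliary data $U$ and $\epsilon$. I would split the verification into two steps: first, independence from the parameters $U$ and $\epsilon$ for fixed representatives $k, l$; second, independence from the choice of representatives.

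For the first step, fix $U$ and suppose $0<\epsilon'<\epsilon$ are both admissible, in the sense that $\cl(U^\epsilon)$ lies in a tubular neighbourhood of $\Omega_{(H)}$ and is disjoint from $l^{-1}(0)$. The linear family $\epsilon_t=(1-t)\epsilon+t\epsilon'$ yields a continuous one-parameter family of potentials $\wt\varphi_t(x+v)=\varphi_G(x)+\tfrac12|v|^2$ on the shrinking tubes $U^{\epsilon_t}$, whose gradients form a gradient otopy from $k^{U,\epsilon}$ to $k^{U,\epsilon'}$ in $\mathcal{F}_G^\nabla(\Omega)$; combining with the constant otopy on $l^{\cl(U^\epsilon)}$ (whose domain is contained in that of $l^{\cl(U^{\epsilon'})}$) produces the required otopy of disjoint unions. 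Independence from $U$ is handled analogously to the preceding lemma for $\wh\Phi$: given two admissible $U_0, U_1\supset k^{-1}(0)$, one passes through $U_0\cap U_1$, using at each stage that the quadratic extension $\tfrac12|v|^2$ glues smoothly across an enlargement of $U$, and produces an otopy between the corresponding disjoint-union maps by appropriately adjusting both the tube and the domain of $l$.

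For the second step, suppose $\kappa$ is a gradient otopy from $k$ to $k'$ in $\mathcal{F}_{WH}^\nabla(\Omega_H)$ and $\lambda$ is a gradient otopy from $l$ to $l'$ in $\mathcal{F}_G^\nabla(\Omega\setminus\Omega_{(H)})$. By the defining property of $\Loc(\cdot,\cdot,\{0\})$ applied to the otopy spaces, both $\kappa^{-1}(0)$ and $\lambda^{-1}(0)$ are compact in their total spaces. Hence one can select an open bounded invariant $U$ containing $\bigcup_t\kappa_t^{-1}(0)$ and a single $\epsilon>0$ so that $\cl(U^\epsilon)$ lies in a tubular neighbourhood of $\Omega_{(H)}$ and is disjoint from $\lambda^{-1}(0)$. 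Applying the parametrized version of the quadratic-extension construction to $\kappa$ produces a gradient otopy in $\mathcal{F}_G^\nabla(\Omega)$ from $k^{U,\epsilon}$ to $k'^{U,\epsilon}$, while restricting $\lambda$ by removing $I\times\cl(U^\epsilon)$ gives a gradient otopy from $l^{\cl(U^\epsilon)}$ to $l'^{\cl(U^\epsilon)}$. Their disjoint union supplies the required gradient otopy, and combined with the first step, this shows that the class $\wt\Phi([k],[l])$ is independent of all auxiliary choices.

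The main obstacle is the parametrized choice of $U$ and $\epsilon$: all of the open/closed conditions imposed in the pointwise construction---containment of zeros in $U$, containment of $\cl(U^\epsilon)$ in a tubular neighbourhood of $\Omega_{(H)}$, and disjointness of $\cl(U^\epsilon)$ from the zero sets of the otopies $\kappa$ and $\lambda$---must hold uniformly in the parameter $t\in I$. This reduces to a straightforward compactness argument using the tube lemma, together with the fact that the zero sets of $\kappa$ and $\lambda$ are compact in the ambient otopy total spaces.
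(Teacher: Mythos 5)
Your overall strategy (a parametrized version of the construction of $k^{U,\epsilon}\sqcup l^{\cl(U^\epsilon)}$ applied to otopies $\kappa,\lambda$, preceded by independence of the auxiliary data) is the same as the paper's, and your first step and the treatment of $\lambda$ are fine. But there is a genuine gap in the second step: you claim one can choose a \emph{single} open bounded invariant $U$ containing $\bigcup_t\kappa_t^{-1}(0)$ and then form $\kappa_t^{U,\epsilon}$ for every $t$. For this construction to make sense you also need $U\subset GD_{\kappa_t}$ for \emph{every} $t$ (the potential $\varphi_G$ is only defined on the saturation of the domain), and this is exactly the condition you omit from your list of ``open/closed conditions to be made uniform in $t$'' --- and it is the one that cannot be made uniform. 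The domains $D_{\kappa_t}$ of the slices of an otopy genuinely vary with $t$: a zero of $\kappa_0$ need not lie in $D_{\kappa_1}$ at all (e.g.\ an otopy whose domain is a sliding interval carrying a sliding zero), so $\bigcup_t\kappa_t^{-1}(0)$ need not be contained in any single $D_{\kappa_t}$, let alone admit a neighbourhood $U$ contained in all of them. Compactness of $\kappa^{-1}(0)$ and the tube lemma do not rescue a globally constant $U$.

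The paper avoids this by choosing a \emph{time-dependent} open invariant set $W\subset I\times\Omega_{(H)}$ with $\kappa^{-1}(0)\subset W\subset\cl W\Subset GD_\kappa$ and running the construction fibrewise with $W_t$ in place of $U$; the price is a small extra otopy at the endpoints $i=0,1$ reconciling $\left(\cl(W^\epsilon)\right)_i$ with $\cl\left(W_i^\epsilon\right)$ when restricting $\lambda$. Your argument can be repaired without introducing $W$ by a subdivision: cover $\kappa^{-1}(0)$ using the tube lemma to get $0=t_0<\dots<t_N=1$ and sets $U_j$ with $\bigcup_{t\in[t_{j-1},t_j]}\kappa_t^{-1}(0)\subset U_j\subset GD_{\kappa_t}$ for all $t$ in the $j$-th subinterval, run your argument on each subinterval, and glue the resulting otopies at the junction points using your already-established independence of the choice of $U$ (and $\epsilon$). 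Either fix is needed; as written, the step ``select $U\supset\bigcup_t\kappa_t^{-1}(0)$'' is where the proof breaks.
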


\begin{proof}
It is easy to see that the definition of $\wt{\Phi}$
does not depend on the choice of $U$ and $\epsilon$.
We show that $\wt{\Phi}$ is also independent of 
the choice of the representative in the otopy class.
Let $k\colon D_k\subset I\times\Omega_H\to V^H$
and 
$l\colon D_l\subset I\times\left(\Omega\setminus\Omega_{(H)}\right)\to V$
be otopies. Let $W\subset I\times\Omega_{(H)}$ 
be an open invariant subset such that
\[
k^{-1}(0)\subset W\subset\cl W\Subset GD_k.
\]
Choose $\epsilon>0$ such that 
$l^{-1}(0)\cap\cl\left(W^\epsilon\right)=\emptyset$.
Recall that for $X\subset I\times\Omega$ we denote by $X_t$
the set $\{x\in\Omega\mid(t,x)\in X\}$.
Since
\[
h_t:=\left(k_t\right)^{W_t,\epsilon}
\sqcup \left(l_t\right)^{\left(\cl\left(W^\epsilon\right)\right)_t}
\]
is an otopy and
\[
\left(k_i\right)^{W_i,\epsilon}
\sqcup \left(l_i\right)^{\cl\left(W_i^\epsilon\right)}
\sim
\left(k_i\right)^{W_i,\epsilon}
\sqcup \left(l_i\right)^{\left(\cl\left(W^\epsilon\right)\right)_i}
\qquad\text{for $i=0,1$}
\]
we obtain $\wt{\Phi}\left([k_0],[l_0]\right)=\wt{\Phi}\left([k_1],[l_1]\right)$,
which is the desired conclusion.
\end{proof}

\begin{lem}\label{lem:bij}
$\wt{\Phi}$ is inverse to $\wh{\Phi}$
and therefore
$\wh{\Phi}$ is a bijection.
\end{lem}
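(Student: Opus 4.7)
The plan is to verify that $\wt{\Phi}$ is a two-sided inverse of $\wh{\Phi}$, i.e.\ that both $\wh{\Phi}\circ\wt{\Phi}$ and $\wt{\Phi}\circ\wh{\Phi}$ are the identity. Because both functions are independent of the choice of $(U,\epsilon)$, we may pick compatible parameters on each side, and most of the technical bookkeeping has already been packaged into Proposition~\ref{prop:constr} and the three observations listed after Proposition~\ref{prop:perturb}.

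For $\wt{\Phi}\circ\wh{\Phi}([f])$: we have $\wh{\Phi}([f])=([f_H],[f^c_{U,\epsilon}])$ for any admissible $(U,\epsilon)$, and evaluating $\wt{\Phi}$ at this pair using the parameters $(U,\epsilon/3)$ gives
\[
\wt{\Phi}(\wh{\Phi}([f]))=\left[(f_H)^{U,\epsilon/3}\sqcup(f^c_{U,\epsilon})^{\cl(U^{\epsilon/3})}\right].
\]
By parts~(3) and~(4) of Proposition~\ref{prop:constr} the two summands coincide with $f^a_{U,\epsilon}$ and $f^n_{U,\epsilon}$ respectively, and then the third bullet of the remark following Proposition~\ref{prop:perturb} yields $[f^n_{U,\epsilon}\sqcup f^a_{U,\epsilon}]=[f]$.

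For $\wh{\Phi}\circ\wt{\Phi}([k],[l])$: set $f=k^{U,\epsilon}\sqcup l^{\cl(U^\epsilon)}$. The second coordinate $[f^c_{U,\epsilon}]=[l]$ is a direct composition of parts (2b) and (2c) of Proposition~\ref{prop:constr}. For the first coordinate we must show $[f_H]=[k]$. Since $l^{\cl(U^\epsilon)}$ has domain contained in $\Omega\setminus\Omega_{(H)}$ (hence disjoint from $\Omega_H$), we have $f_H=(k^{U,\epsilon})|_{U^\epsilon\cap\Omega_H}$. The point is the orthogonality $N_x=(T_x\Omega_{(H)})^\bot\subset(V^H)^\bot$ for every $x\in\Omega_H$, which follows from $V^H=T_x\Omega_H\subset T_x\Omega_{(H)}$; consequently any $x+v\in\Omega_H$ with $v\in N_x$ forces $v=0$, so $U^\epsilon\cap\Omega_H=U\cap\Omega_H$. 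On this slice $\nabla\wt{\varphi}$ coincides with $\nabla\varphi=k$, so $f_H$ is merely the restriction of $k$ to a neighbourhood of its zero set and is therefore otopic to $k$ in $\mathcal{F}_{WH}^\nabla[\Omega_H]$ via a shrinking otopy.

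The main subtlety is precisely this last identification $[f_H]=[k]$: it depends on the orthogonal decomposition governing the tubular structure around $\Omega_{(H)}$, and specifically on the inclusion $N_x\subset(V^H)^\bot$ that collapses the normal slice used in $\wt{\Phi}$ to its zero section upon intersection with $\Omega_H$. Once this is established, the bijectivity of $\wh{\Phi}$ is immediate from having exhibited a two-sided inverse $\wt{\Phi}$.
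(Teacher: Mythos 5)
Your proof is correct and follows essentially the same route as the paper's: both compositions are checked via Proposition~\ref{prop:constr} (parts (2b), (2c) for one direction; parts (3), (4) with the parameter $\epsilon/3$ and the remark after Proposition~\ref{prop:perturb} for the other). Your only addition is to spell out why the first coordinate of $\wh{\Phi}\circ\wt{\Phi}$ recovers $[k]$ — the paper simply writes this coordinate as $\left[\restrictionmap{k}{U}\right]$ and identifies it with $[k]$ — and your justification via $N_x\subset(V^H)^\bot$ is a correct elaboration of that implicit step.
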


\begin{proof}
The calculations below are based on Proposition \ref{prop:constr}.
Observe that
\begin{multline*}
\wh{\Phi}\circ\wt{\Phi}\left([k],[l]\right)=
\wh{\Phi}\left(\left[k^{U,\epsilon}\sqcup l^{\cl\left(U^\epsilon\right)}\right]\right)=\\
\left(\left[\restrictionmap{k}{U}\right],
\left[\left(k^{U,\epsilon}\sqcup l^{\cl\left(U^\epsilon\right)}\right)^c_{U,\epsilon}\right]\right)=
\left([k],\left[l^{\cl\left(U^\epsilon\right)}\right]\right)=
\left([k],[l]\right).
\end{multline*}
In turn, if in $\wt{\Phi}$ we take $\epsilon/3$ instead of $\epsilon$ we obtain
\[
\wt{\Phi}\circ\wh{\Phi}\left([f]\right)=
\wt{\Phi}\left(\left[f_H\right],\left[f^c_{U,\epsilon}\right]\right)=
\left[\left(f_H\right)^{U,\epsilon/3}\sqcup
\left(f^c_{U,\epsilon}\right)^{\cl\left(U^{\epsilon/3}\right)}\right]=
\left[f^n_{U,\epsilon}\sqcup
f^a_{U,\epsilon}\right]=[f],
\]
which completes the proof.
\end{proof}

The next lemma follows directly from the construction
of $\wh{\Phi}$ and the definition of the sequence $f_i$
(Remark \ref{rem:defin}).
\begin{lem}\label{lem:phii}
Let $f\in\mathcal{F}_G^\nabla(\Omega)$.
For each $i$ the bijection $\wh{\Phi}_i$
has the following properties:
\begin{enumerate}
\item $\wh{\Phi}_i([\emptyset])=([\emptyset],[\emptyset])$,
\item $\wh{\Phi}''_i([f_{i}])=[f_{i+1}]$,
\item $\wh{\Phi}'_i([f_{i}])=\Phi_i([f])$.
\end{enumerate}
\end{lem}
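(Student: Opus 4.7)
The plan is to prove all three properties by directly unpacking the definitions from Section~\ref{sec:phi} and Remark~\ref{rem:defin}. None of them requires new constructions; one just needs to match up the data and invoke well-definedness.

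For (1), I would take $f=\emptyset$ and choose $U=\emptyset$ as the admissible open subset in the construction from Subsection~\ref{subsec:def}. Then $f_{U,\epsilon}=\emptyset$ for any suitable $\epsilon$, so both $f_H=\restrictionmap{f}{D_f\cap\Omega_H}$ and $f^c_{U,\epsilon}$ are empty as well. Hence $\wh\Phi_i([\emptyset])=([\emptyset],[\emptyset])$, and the already-established well-definedness of $\wh\Phi_i$ guarantees that this outcome is independent of the particular admissible choice made.

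For (2), the statement is essentially tautological once definitions are compared. By Remark~\ref{rem:defin}, $f_{i+1}$ is by construction $(f_i)^c_{U_i,\epsilon_i}$ for some admissible pair $(U_i,\epsilon_i)$, while the definition of $\wh\Phi''_i$ (the second component of $\wh\Phi$, now with $\Omega$ replaced by $\Omega_i$ and $H$ by $H_i$) gives $\wh\Phi''_i([f_i])=[(f_i)^c_{U_i,\epsilon_i}]$. Since the otopy class $[(f_i)^c_{U_i,\epsilon_i}]$ does not depend on $(U_i,\epsilon_i)$ — this is exactly the well-definedness of $\wh\Phi''_i$ already established in this section — the two agree.

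For (3), I would run a short induction on $i$ showing $\Xi_{i-1}([f])=[f_i]$. The base case $\Xi_0=\id$ gives $\Xi_0([f])=[f]=[f_1]$. For the inductive step, $\Xi_i=\wh\Phi''_i\circ\Xi_{i-1}$, and applying (2) yields $\Xi_i([f])=\wh\Phi''_i([f_i])=[f_{i+1}]$. Therefore $\Phi_i([f])=\wh\Phi'_i\circ\Xi_{i-1}([f])=\wh\Phi'_i([f_i])$, which is (3). The only point requiring care is to reconcile the freedom in the choice of $(U_i,\epsilon_i)$ in the construction of the sequence $f_i$ with the analogous freedom in the definition of $\wh\Phi_i$; this is precisely handled by the well-definedness lemmas already proved in this section, so there is no substantive obstacle beyond bookkeeping.
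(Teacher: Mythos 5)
Your proposal is correct and matches the paper's treatment: the paper offers no written proof, stating only that the lemma ``follows directly from the construction of $\wh{\Phi}$ and the definition of the sequence $f_i$,'' and your argument is precisely that direct verification (empty choices for (1), matching $f_{i+1}=(f_i)^c_{U_i,\epsilon_i}$ against the definition of $\wh\Phi''_i$ for (2), and the induction $\Xi_{i-1}([f])=[f_i]$ for (3)), with the choice-independence correctly delegated to the well-definedness lemmas already proved in that section.
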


Now we can move on to the final purpose of this section.
\begin{proof}[Proof of Theorem \ref{thm:phi}]
First we show that $(\Phi_1,\Phi_2,\dotsc,\Phi_m)$ is a bijection. 
Since
\begin{enumerate}
\item $(\Phi_1,\Xi_1)=\wh{\Phi}_1$ is a bijection,
\item the bijectivity of $(\Phi_1,\dotsc,\Phi_i,\Xi_i)$
      and $\wh{\Phi}_{i+1}$
      imply the bijectivity of 
      \[(\Phi_1,\dotsc,\Phi_i,\Phi_{i+1},\Xi_{i+1})
      =(\Phi_1,\dotsc,\Phi_i,\wh{\Phi}_{i+1}\circ\Xi_{i}),\]
\item the set of values of $\Xi_m$ is  equal to the singleton
      $\mathcal{F}_G^\nabla[\emptyset]$,     
\end{enumerate}
by induction on $i$ we obtain  
that $(\Phi_1,\Phi_2,\dotsc,\Phi_m)$ is 
a bijection.

Now we will prove the second part of our statement.
Assume that $f$ is $(H_j)$-normal.
Note that $f^{-1}(0)\subset\Omega_{(H_j)}$.
First observe that since $f_i=\restrictionmap{f}{D_f\cap\Omega_i}$
for $i\le j$ we have
\[
\Phi_i([f])=
[\restrictionmap{f}{D_f\cap\Omega_{H_i}}]=
\begin{cases}
[\emptyset]& \text{if $i<j$},\\
[f_{H_j}]& \text{if $i=j$}.
\end{cases}
\]
Note that $(H_j)$ is maximal in $\Iso(\Omega_j)$.
By definition, 
$f_j=\restrictionmap{f}{D_f\cap\Omega_j}$
and $f_{j+1}=(f_j)^c_{U,\epsilon}$.
Hence, by Proposition \ref{prop:normal}, $[f_{j+1}]=[\emptyset]$ in 
$\mathcal{F}_G^\nabla\left[\Omega_{j+1}\right]$.
By Lemma \ref{lem:phii},
$[f_i]=[\emptyset]$ in 
$\mathcal{F}_G^\nabla\left[\Omega_i\right]$ for $i>j$,
and, in consequence,
\[
\Phi_i([f])=
\wh{\Phi}'_i([f_{i}])
=[\emptyset]\qquad\text{for $i>j$.}
\]
\end{proof}

%%%%%%%%%%%%%%%%%%%%%%%%  Sec. Parusinski %%%%%%%%%%%%%%%%%%%%%%%%%%%
\section{The Parusi\'nski type theorem for equivariant local maps}
\label{sec:parusinski}

The aim of this section is to compare the sets of equivariant
and equivariant gradient otopy classes.
Note that the inclusion
$\mathcal{F}_G^{\nabla}(\Omega)\hookrightarrow\mathcal{F}_G(\Omega)$
induces the well-defined function 
$\mathcal{J}\colon\mathcal{F}_G^{\nabla}[\Omega]\to\mathcal{F}_G[\Omega]$.
In \cite{BP1} we proved that in the absence of group action
the function $\mathcal{J}$ is a bijection.
It turns out that in the equivariant case
the following Parusi\'nski type theorem (see \cite{P}) holds.
Define~$\Iso_0(\Omega):=\{(H)\in\Iso(\Omega)\mid\dim WH=0\}$.

\begin{thm}\label{thm:par}
$\mathcal{J}$ is a bijection if and only if 
$\Iso_0(\Omega)=\Iso(\Omega)$.
\end{thm}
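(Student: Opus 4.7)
The plan is to factor $\mathcal{J}$ through the orbit-type decompositions and reduce the question to a factorwise analysis. By Theorem \ref{thm:phi} the top arrow of the commutative square
\[
\begin{CD}
\mathcal{F}_G^{\nabla}[\Omega] @>{\Phi}>> \prod_{i=1}^m\mathcal{F}_{WH_i}^{\nabla}[\Omega_{H_i}]\\
@V{\mathcal{J}}VV @VV{\prod_i\mathcal{J}_i}V\\
\mathcal{F}_G[\Omega] @>>{\Phi^{\mathrm{eq}}}> \prod_{i=1}^m\mathcal{F}_{WH_i}[\Omega_{H_i}]
\end{CD}
\]
is a bijection, and the bottom arrow is the analogous bijection for equivariant (not necessarily gradient) local maps from \cite{B1,B2}; both decompositions proceed by the same restriction-and-perturbation scheme along $\Omega_{(H_i)}$ with respect to the reverse order on $\Iso(\Omega)$, so $\mathcal{J}_i$ is simply the inclusion on the $i$-th factor. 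Hence $\mathcal{J}$ is a bijection if and only if each $\mathcal{J}_i\colon\mathcal{F}_{WH_i}^{\nabla}[\Omega_{H_i}]\to\mathcal{F}_{WH_i}[\Omega_{H_i}]$ is.

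Since $WH_i$ acts freely on $\Omega_{H_i}$, the quotient $M_i:=\Omega_{H_i}/WH_i$ is a smooth Riemannian manifold, and Theorem \ref{thm:psi} identifies $\mathcal{F}_{WH_i}^{\nabla}[\Omega_{H_i}]\cong\mathcal{F}^{\nabla}[M_i]$. On the other side, $\mathcal{F}_{WH_i}[\Omega_{H_i}]$ corresponds to otopy classes of local sections of the associated bundle $E_i=\Omega_{H_i}\times_{WH_i}V^{H_i}\to M_i$, whose rank is $\dim V^{H_i}$ while $\dim M_i=\dim V^{H_i}-\dim WH_i$. If $\dim WH_i=0$, the ranks match and the canonical identification $E_i\cong TM_i$ turns $\mathcal{J}_i$ into the forgetful map $\mathcal{F}^{\nabla}[M_i]\to\mathcal{F}[M_i]$, which is a bijection componentwise by the non-equivariant Parusi\'nski-type result of \cite{BP1,BP4}. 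This proves sufficiency.

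For the converse, assume $\dim WH_j>0$ for some $j$ and exhibit a nontrivial gradient class collapsing under $\mathcal{J}$. Fix an orbit $\Or\subset\Omega_{(H_j)}$ and let $f$ be an orbit-normal gradient map around $\Or$; part (4) of the Main Theorem gives $[f]\neq[\emptyset]$ in $\mathcal{F}_G^{\nabla}[\Omega]$. The space of $G$-invariant tangent vector fields on $\Or\cong G/H_j$ is canonically $(\mathfrak g/\mathfrak h_j)^{H_j}\cong\mathrm{Lie}(WH_j)$, which is nonzero precisely because $\dim WH_j>0$. Thus one obtains a nowhere-vanishing $G$-equivariant tangent vector field $\xi_*$ on $\Or$. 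Pick a smooth cutoff $\rho\in C^\infty([0,\epsilon])$ with $\rho\equiv 1$ near $0$ and $\rho\equiv 0$ near $\epsilon$, set $\tilde\xi(x+v):=\rho(\abs{v})\xi_*(x)$ on $\Or^\epsilon$ and extend by zero to $D_f$, and consider the equivariant homotopy $h_t:=f+t\tilde\xi$.

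On $D_f\setminus\Or^\epsilon$ one has $h_t=f$, so $h_t^{-1}(0)\subset\Or\setminus\Or^\epsilon=\emptyset$; on $\Or^\epsilon$ one computes $h_t(x+v)=v+t\rho(\abs{v})\xi_*(x)$, and since $v\in(T_x\Or)^\perp$ is orthogonal to the nonzero vector $\xi_*(x)\in T_x\Or$, any zero forces $v=0$ and $t=0$. Hence $h^{-1}(0)=\{0\}\times\Or$ is compact, $h$ is a legitimate $G$-equivariant otopy, and $h_1$ has empty zero set; therefore $\mathcal{J}([f])=[h_1]=[\emptyset]$, contradicting injectivity of $\mathcal{J}$. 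The main obstacle I anticipate is the bookkeeping needed to match $\Phi^{\mathrm{eq}}$ of \cite{B1,B2} with $\Phi$ so that $\mathcal{J}_i$ really is the inclusion on the $i$-th factor; once that commutativity is secured, the remainder is a clean synthesis of Theorem \ref{thm:psi}, the non-equivariant Parusi\'nski bijection, and the explicit tangential perturbation above.
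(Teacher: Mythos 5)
Your proposal is correct in outline and, for the sufficiency direction, essentially reproduces the paper's argument: the commutative square comparing $\Phi$ with the non-gradient decomposition $\Upsilon$ of \cite{B1} reduces everything to the free-action factors, and when $\dim WH_i=0$ the identification of $\mathcal{F}_{WH_i}[\Omega_{H_i}]$ with otopy classes of local sections of $E_i\cong TM_i$ together with Theorem \ref{thm:psi} and the non-equivariant Hopf/Parusi\'nski results of \cite{BP1,BP4} gives the bijectivity of $\mathcal{J}_i$ — this is exactly the paper's ``diamond'' diagram. For necessity you diverge: the paper simply invokes \cite[Thm 3.1]{B1} (triviality of $\mathcal{F}_{WH_i}[\Omega_{H_i}]$ when $\dim WH_i>0$ and the action is free) against the nontriviality of $\mathcal{F}_{WH_i}^{\nabla}[\Omega_{H_i}]$ from Corollary \ref{cor:psi}, whereas you explicitly kill an orbit-normal map by adding the tangential field $t\rho(\abs{v})\xi_*(x)$ coming from a nonzero element of the Lie algebra of $WH_j$; your orthogonality argument ($v\perp\xi_*(x)$ forces $v=0$ and $t=0$) is sound, the zero set $\{0\}\times\Or$ is compact, and part (4) of Main Theorem certifies $[f]\neq[\emptyset]$ gradient-otopically, so this is a valid and pleasantly self-contained alternative that does not even need the commutative square for this direction. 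The one substantive step you defer is precisely the one the paper spends a dedicated lemma on: the gradient perturbation used in $\wh\Phi$ is $\nabla(\varphi\circ r_1)+\nabla\omega=(Dr_1)^T\cdot(\nabla\varphi)\circ r_1+\nabla\omega$, which is \emph{not} literally the map $(\nabla\varphi)\circ r_1+\nabla\omega$ used in $\wh\Upsilon$, so ``$\mathcal{J}_i$ is simply the inclusion on the $i$-th factor'' requires showing these two are equivariantly otopic on $\Omega\setminus\Omega_{(H)}$; the paper does this via the straight-line homotopy and a block-diagonal analysis of $(Dr_1)^T$ showing the two maps have the same zero set. Since sufficiency genuinely needs this commutativity, you should carry out (or at least cite) that verification rather than leave it as anticipated bookkeeping.
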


The remainder of this section will be devoted to the proof of this result.
Recall that in \cite{B1} we showed the existence of the bijection
$
\Upsilon\colon\mathcal{F}_G[\Omega]\to
\prod_{i=1}^m\mathcal{F}_{WH_i}\left[\Omega_{H_i}\right]
$
and in the present paper the existence of the bijection
$
\Phi\colon\mathcal{F}_G^\nabla[\Omega]\to
\prod_{i=1}^m\mathcal{F}_{WH_i}^\nabla\left[\Omega_{H_i}\right]
$.
It is natural to consider the following diagram
\begin{equation}\label{diag:prod}
\begin{CD}
\mathcal{F}_G^{\nabla}[\Omega]@>\Phi>>
\prod_{i=1}^m\mathcal{F}_{WH_i}^\nabla\left[\Omega_{H_i}\right]\\
@V\mathcal{J}VV @VV\prod\mathcal{J}_iV\\
\mathcal{F}_G[\Omega]@>\Upsilon>>
\prod_{i=1}^m\mathcal{F}_{WH_i}\left[\Omega_{H_i}\right],
\end{CD}
\end{equation}
where
$\mathcal{J}_i\colon\mathcal{F}_{WH_i}^\nabla\left[\Omega_{H_i}\right]
\to\mathcal{F}_{WH_i}\left[\Omega_{H_i}\right]$ 
are also induced by the inclusions.
The commutativity of diagram \eqref{diag:prod} follows from
the inductive definitions of $\Phi$ and $\Upsilon$ and
the following result. 

\begin{lem}
Assume that $(H)$ is maximal in $\Iso(\Omega)$.
Let 
$
\mathcal{J}'\colon
\mathcal{F}_{WH}^\nabla\left[\Omega_H\right]
\to\mathcal{F}_{WH}\left[\Omega_H\right]
$
and
$
\mathcal{J}''\colon
\mathcal{F}_G^\nabla\left[\Omega\setminus\Omega_{(H)}\right]
\to\mathcal{F}_G\left[\Omega\setminus\Omega_{(H)}\right]
$
be induced by the respective inclusions.
Then the diagram
\begin{equation}\label{diag:times}
\begin{CD}
\mathcal{F}_G^{\nabla}[\Omega]@>\wh\Phi>>
\mathcal{F}_{WH}^\nabla\left[\Omega_H\right]\times
\mathcal{F}_G^\nabla\left[\Omega\setminus\Omega_{(H)}\right]\\
@V\mathcal{J}VV @VV\mathcal{J}'\times\mathcal{J}''V\\
\mathcal{F}_G[\Omega]@>\wh\Upsilon>>
\mathcal{F}_{WH}\left[\Omega_H\right]\times
\mathcal{F}_G\left[\Omega\setminus\Omega_{(H)}\right]
\end{CD}
\end{equation}
commutes.
\end{lem}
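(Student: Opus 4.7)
The plan is a direct diagram chase. Given $[f]\in\mathcal{F}_G^\nabla[\Omega]$, going clockwise one computes $\wh\Phi([f])=([f_H],[f^c_{U,\epsilon}])$ by the definition in Section \ref{sec:phi}, and then $\mathcal{J}'\times\mathcal{J}''$ merely forgets the gradient otopy relation, producing the pair $([f_H],[f^c_{U,\epsilon}])$ in $\mathcal{F}_{WH}[\Omega_H]\times\mathcal{F}_G[\Omega\setminus\Omega_{(H)}]$.

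Going counterclockwise, $\mathcal{J}([f])=[f]$ and then one evaluates $\wh\Upsilon([f])$. By the construction of $\Upsilon$ in \cite{B1}, which proceeds along the same template as $\Phi$ here, the first coordinate of $\wh\Upsilon$ is given by the restriction $f\mapsto\restrictionmap{f}{D_f\cap\Omega_H}$, so it matches $[f_H]$ automatically. For the second coordinate, $\wh\Upsilon$ is built in \cite{B1} by perturbing $f$ so that its zero set outside $\Omega_{(H)}$ becomes compact and then restricting to $\Omega\setminus\Omega_{(H)}$; the well-definedness proved there asserts that the resulting class is independent of the chosen perturbation.

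The observation that closes the argument is that the gradient perturbation $f_{U,\epsilon}$ of Section \ref{sec:phi} qualifies as a valid input for this non-gradient construction. Indeed, Proposition \ref{prop:perturb} furnishes the otopy $h(t,z)=\nabla\varphi_t(z)$, which is a fortiori a continuous equivariant (non-gradient) otopy from $\restrictionmap{f}{D_f\setminus B^\epsilon}$ to $f_{U,\epsilon}$, and $f^c_{U,\epsilon}$ has compact zero set by design. Invoking well-definedness of $\wh\Upsilon$, the second coordinate of $\wh\Upsilon([f])$ is therefore represented by $f^c_{U,\epsilon}$ in $\mathcal{F}_G[\Omega\setminus\Omega_{(H)}]$, and both compositions deliver the same pair.

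The main obstacle is the dependence on the precise construction of $\wh\Upsilon$ from \cite{B1}: one must verify that its perturbation step accepts gradient perturbations as admissible representatives and that its well-definedness lemma covers them. Once this compatibility is extracted from \cite{B1}, commutativity of \eqref{diag:times} drops out without any further otopy having to be constructed by hand.
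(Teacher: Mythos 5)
Your diagram chase correctly isolates the one nontrivial claim: that the class $\left[f^c_{U,\epsilon}\right]$ produced by the \emph{gradient} perturbation also computes the second coordinate of $\wh\Upsilon([f])$. But you then dispose of this claim by ``invoking well-definedness of $\wh\Upsilon$,'' and that is where the gap lies. The well-definedness statement for $\wh\Upsilon$ in \cite{B1} (like the one for $\wh\Phi$ in Section \ref{sec:proof}) asserts independence of the choices of $U$, $\epsilon$ and of the representative of the otopy class \emph{within the specific perturbation formula used there}; it does not assert that every map otopic to $\restrictionmap{f}{D_f\setminus B^{\epsilon}}$ in $\mathcal{F}_G(\Omega)$ with compact zero set off $\Omega_{(H)}$ yields the same class after restriction. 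That stronger statement is false in general: two maps otopic in $\Omega$, each with compact zero set off $\Omega_{(H)}$, need not have otopic restrictions to $\Omega\setminus\Omega_{(H)}$ (an otopy can push a zero orbit off $\Omega_{(H)}$). So the fact that $f_{U,\epsilon}$ is otopic to $f$ (Proposition \ref{prop:perturb}) does not by itself identify $\left[f^c_{U,\epsilon}\right]$ with $\wh\Upsilon''([f])$; the ``compatibility to be extracted from \cite{B1}'' that you defer is the entire content of the lemma.

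The missing step is an explicit comparison of the two perturbations. On $U^{\epsilon}\setminus\Omega_{(H)}$ the gradient construction gives
\[
k(z)=\nabla\left(\varphi\circ r_1\right)(z)+\nabla\omega(\abs{v})
=\left(Dr_1(z)\right)^T\cdot\nabla\varphi(r_1(z))+\nabla\omega(\abs{v}),
\]
whereas the construction of $\wh\Upsilon$ gives $\wt k(z)=\nabla\varphi\left(r_1(z)\right)+\nabla\omega(\abs{v})$; these differ by the factor $\left(Dr_1(z)\right)^T$ and are genuinely different maps. The paper closes the gap by showing that in suitable coordinates $\left(Dr_1(z)\right)^T$ is block diagonal, with one block positive diagonal and the other nonsingular exactly where needed, so that $k$, $\wt k$ and every convex combination $(1-t)k+t\wt k$ have the same zero set; hence the straight-line homotopy is an otopy in $\mathcal{F}_G\left(\Omega\setminus\Omega_{(H)}\right)$ and $[k]=[\wt k]$. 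Some argument of this kind --- an otopy constructed between the two perturbations, not an appeal to well-definedness --- is unavoidable here.
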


\begin{proof}
Let $f=\nabla\varphi\in\mathcal{F}_G^{\nabla}(\Omega)$.
Recall that $\wh{\Phi}([f])=
\left(\left[f_H\right],\left[k\right]\right)$, where
\[
k(z)=\begin{cases}
\nabla\left(\varphi\circ r_1\right)(z)+\nabla\omega(\abs{v})&
\text{if $z=x+v\in U^{\epsilon}\setminus\Omega_{(H)}$},\\
\nabla\varphi(z)&
\text{if $z\in D_f\setminus\left(U^\epsilon\cup 
B^{\epsilon}\cup\Omega_{(H)}\right)$},
\end{cases}
\] 
and $\wh{\Upsilon}([f])=
\left(\left[f_H\right],\left[\wt k\right]\right)$, where
\[
\wt k(z)=\begin{cases}
\nabla\varphi\left(r_1(z)\right)+\nabla\omega(\abs{v})&
\text{if $z=x+v\in U^{\epsilon}\setminus\Omega_{(H)}$},\\
\nabla\varphi(z)&
\text{if $z\in D_f\setminus\left(U^\epsilon\cup 
B^{\epsilon}\cup\Omega_{(H)}\right)$}.
\end{cases}
\] 
First observe that 
$\mathcal{J}'\circ\wh{\Phi}'([f])=\left[f_H\right]=
\wh{\Upsilon}'\circ\mathcal{J}([f])$. To prove that
$\mathcal{J}''\circ\wh{\Phi}''=\wh{\Upsilon}''\circ\mathcal{J}$
it is enough to show that the straight-line homotopy
$h_t=(1-t)k+t\wt k$ is an otopy in 
$\mathcal{F}_G\left(\Omega\setminus\Omega_{(H)}\right)$.
To see that, note that
\[
k(z)=\nabla\left(\varphi\circ r_1\right)(z)+\nabla\omega(\abs{v})=
\left(Dr_1(z)\right)^T\cdot\nabla\varphi(r_1(z))+\nabla\omega(\abs{v})
\quad\text{on $ U^{\epsilon}\setminus\Omega_{(H)}$}.
\]
Using orthogonal coordinates $x$ being principal directions
corresponding to the principal curvatures of $\Omega_{(H)}$ and
orthogonal coordinates $v$ we get that the matrix
$\left(Dr_1(z)\right)^T=\left(D(x+\mu(\abs{v})v)\right)^T$ 
has the following form
\[
\left(
\begin{array}{c:c} A &  0 \\ \hdashline
0 & B \end{array}
\right),
\]
where $A$ is diagonal with positive entries on the diagonal
and $B=D(\mu(\abs{v})v)$ is nonsingular if{f} 
$2\epsilon/3<\abs{v}<\epsilon$. Therefore
\[
h_t(z)=0\equiv
k(z)=0\equiv
\wt k(z)=0
\]
for all $t\in I$ and, in consequence, $h$ is an otopy.
\end{proof}

Since from Diagram \eqref{diag:prod} $\mathcal{J}$ is a bijection
if and only if all $\mathcal{J}_i$ are bijections
and the action of $WH_i$ on $\Omega_{H_i}$ is free,
it remains to study the function
${\mathcal{J}}\colon\mathcal{F}_G^{\nabla}
[\Omega]\to\mathcal{F}_G[\Omega]$
assuming $G$ acts freely on $\Omega$. We will consider two cases.
If $\dim G>0$ then $\mathcal{F}_G[\Omega]$ \
is trivial (see \cite[Thm 3.1]{B1}))
and $\mathcal{F}_G^{\nabla}[\Omega]$ is nontrivial 
by Corollary \ref{cor:psi},
and so ${\mathcal{J}}$ is not a bijection.
Now consider the case $\dim G=0$.
Recall that $M=\Omega/G$, $E=(\Omega\times V)/G$
and $\Gamma[M,E]$ denotes the set of otopy classes 
of local cross sections of the bundle $E\to M$. 
The following commutative diagram \eqref{diag:diamond}
relates the sets of different otopy classes.
Since $a$, $b$, $c$, $d$ are bijections by
Theorem \ref{thm:psi}, \cite[Thm 3.4]{B1}, \cite[Thm 5.1]{BP4}
and the natural identification of vector bundles
$E$ and $TM$, so is ${\mathcal{J}}$.

\begin{equation}\label{diag:diamond}
\begin{split}
\xymatrix{
\mathcal{F}_G^{\nabla}[\Omega]\ar@{-->}[dd]_{\mathcal{J}} 
\ar[r]^a &\mathcal{F}^\nabla[M]\ar[d]^c\\
                     &\mathcal{F}[M]\\
\mathcal{F}_G[\Omega]\ar[r]^b &\Gamma[M,E]\ar@{<->}[u]_d}
\end{split}
\end{equation}\vspace{0.5mm}

The commutativity of Diagram \eqref{diag:prod} and the above
considerations concerning the single function
$\mathcal{J}_i\colon\mathcal{F}_{WH_i}^\nabla\left[\Omega_{H_i}\right]
\to\mathcal{F}_{WH_i}\left[\Omega_{H_i}\right]$ 
complete the proof of Theorem \ref{thm:par}.

\begin{rem}
It may be worth noting the relation 
of our constructions with the additive subgroups
$U(V)$ and $A(V)$ of the Euler ring $U(G)$
and the Burnside ring $A(G)$ taking into account
only their additive structure. Namely,
denoting by $\Theta_U$ the function $\Theta$
from Main Theorem, by $\Theta_A$ its nongradient
version from \cite{B1}, by $\pi_U$ and $\pi_A$
the summing on $j$ and by $\mathcal{U}$ the forgetful functor,
we obtain the following commutative diagram:
\begin{equation}\label{diag:deg}
\begin{CD}
\mathcal{F}_G^{\nabla}[\Omega]@>\Theta_U>>
{\displaystyle\prod_{\Iso(\Omega)}\biggl(\sum_j\Z\biggr)}@>\pi_U>>
U(V)={\displaystyle\prod_{\Iso(\Omega)}\Z}\\
@V\mathcal{J}VV @VV\mathcal{U}V @VV\mathcal{U}V\\
\mathcal{F}_G[\Omega]@>\Theta_A>>
{\displaystyle\prod_{\Iso_0(\Omega)}\biggl(\sum_j\Z\biggr)}@>\pi_A>>
A(V)={\displaystyle\prod_{\Iso_0(\Omega)}\Z}.
\end{CD}
\end{equation}

\noindent Moreover, observe that $\pi_U\circ\Theta_U=\deg_G^\nabla$ and
$\pi_A\circ\Theta_A=\deg_G$ using the notation of 
the respective degrees from \cite{BGI}.
\end{rem}

%%%%%%%%%%%%%%%%%%%%%%%%  Sec. Parametrized %%%%%%%%%%%%%%%%%%%%%%%%%%%
\section{Parametrized equivariant gradient local maps}
\label{sec:parametrized}

We close the paper with some remarks concerning the parametrized case.
Recall that $\R^k$ denotes a trivial representation of $G$.
Let $\Omega$ be an open invariant subset of $\R^k\oplus V$.
Define $\mathcal{F}_G(\Omega)=\Loc_G(\Omega,V,0)$.
Let $(y,x)$ denotes the coordinates $\R^k\oplus V$.
A map $f\in\mathcal{F}_G(\Omega)$ is called \emph{gradient}
if there is a function (not necessarily continuous)
$\varphi\colon D_f\to\R$ such that 
$\varphi$ is $C^1$ with respect to $x$ and 
$f(x,y)=\nabla_x\varphi(x,y)$. 
Define 
$\mathcal{F}_G^\nabla(\Omega)
=\{f\in\mathcal{F}_G(\Omega)\mid\text{$f$ is gradient}\,\}$.
Similarly as in the nonparametrized  case we define 
gradient otopies and the set of gradient otopy classes
$\mathcal{F}_G^\nabla[\Omega]$.

It is easily seen that 
Theorem \ref{thm:phi} also holds
in that case,
because both the construction of 
the function
$
\Phi\colon\mathcal{F}_G^\nabla[\Omega]\to
\prod_{i=1}^m\mathcal{F}_{WH_i}^\nabla\left[\Omega_{H_i}\right]
$
and the proof of Theorem \ref{thm:phi}
are essentially the same.

Now let us describe the single factor of the above decomposition.
Assume that $G$ acts freely on $\Omega\subset\R^k\oplus V$.
Set $M=\Omega/G$. Define $\pi\colon M\to\R$ by $\pi([y,x])=y$.
Observe that $M_y=\pi^{-1}(y)$ is a submanifold of $M$
(possibly empty) for every $y\in\R^k$.
Let us consider a subbundle $E\subset TM$ defined by
$E_p=T_p M_{\pi(p)}$ for $p\in M$.
The set of all local continuous sections 
of the bundle $E$
will be denoted by $\Gamma(M,E)$.
A local section $s\colon D_s\subset M\to E$
is called \emph{gradient} if there is a function 
$\varphi\colon D_s\to\R$ (not necessarily continuous)
such that $\varphi$ is $C^1$ on every $D_s\cap M_y$ and
\[
s(p):=\nabla_\pi\varphi(p)=
\nabla
\left(\restrictionmap{\varphi}{D_s\cap  M_{\pi(p)}}\right)(p).
\]
Write 
$\mathcal{F}_\pi^\nabla(M)
:=\{s\in\Gamma(M,E)\mid\text{$s$ is gradient}\,\}$.
Recall that if $\varphi\colon U\to\R$ 
is an invariant function
then $\wt\varphi$ stands 
for the quotient function
$\wt\varphi\colon U/G\to\R$.
Finally, let 
$\Psi\colon\mathcal{F}^\nabla_G(\Omega)
\to\mathcal{F}_\pi^\nabla(M)$
be defined by 
$\Psi\left(\nabla_x\varphi\right)=
\nabla_\pi\wt\varphi$.
The following result,
which is an analogue of Theorem \ref{thm:psi},
allows us to replace
single factors of our decomposition $\Phi$
by the sets of gradient otopy classes
of parametrized maps on quotient manifolds
(the free $G$-action has been divided out).

\begin{prop}
$\Psi$ is a bijection and induces a bijection between 
the sets of gradient otopy classes
$\mathcal{F}^\nabla_G[\Omega]$ and 
$\mathcal{F}_\pi^\nabla[M]$.
\end{prop}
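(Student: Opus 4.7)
The plan is to reduce the parametrized statement to the nonparametrized Theorem \ref{thm:psi} by slicing over the trivial factor $\R^k$, and then promote the map-level bijection to the otopy level by applying the same slicing argument to otopies (which live over $I\times\R^k$).

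First I would check well-definedness of $\Psi$. Given $f=\nabla_x\varphi\in\mathcal{F}_G^\nabla(\Omega)$, equivariance of $f$ forces $\varphi$ to be $G$-invariant along each slice $\Omega_y:=\Omega\cap(\{y\}\times V)$, so (after normalizing constants fiberwise) it descends to $\wt\varphi\colon D_f/G\to\R$ that is $C^1$ on each fiber $M_y=\Omega_y/G$. For fixed $y$, the $G$-action on $\Omega_y$ is free, and the quotient Riemannian metric on $M_y$ is the one induced by the ambient submersion $q\colon\Omega\to M$ restricted to the slice; hence $E_p=T_pM_{\pi(p)}$ is naturally identified with the horizontal perpendicular to the $G$-orbit in $T_{\tilde{p}}\Omega_{\pi(p)}$, and the pointwise formula $\Psi(f)(p)=\nabla_\pi\wt\varphi(p)$ coincides fiberwise with the operator in Theorem \ref{thm:psi}. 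Continuity of $\Psi(f)$ follows because it is the image of the continuous $f$ under the smooth bundle identification, and properness of the restriction $f^{-1}(0)\Subset D_f$ passes to $\Psi(f)^{-1}(0\text{-section})\Subset D_f/G$ because $q$ is proper on $G$-invariant compacta.

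Next I would build the inverse $\Psi^{-1}$. Given $s=\nabla_\pi\varphi\in\mathcal{F}_\pi^\nabla(M)$, set $\varphi^*=\varphi\circ q$ on $q^{-1}(D_s)$; this is $G$-invariant and $C^1$ in $x$ (on each slice), so $f:=\nabla_x\varphi^*$ makes sense. Fiberwise, Theorem \ref{thm:psi} applied to $\Omega_y\to M_y$ tells us that $\nabla_x\varphi^*\restriction_{\Omega_y}$ is the image of $\nabla_\pi\varphi\restriction_{M_y}$ under the inverse of $\Psi$ on slices; in particular $\Psi\circ\Psi^{-1}=\mathrm{id}$ and $\Psi^{-1}\circ\Psi=\mathrm{id}$ at the map level, and joint continuity of $f$ together with properness of $f^{-1}(0)$ follow from the corresponding properties of $s$ together with properness of $q$.

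For the otopy-class statement, I would simply repeat the same construction with an extra parameter $t\in I$: an otopy $h\in\mathcal{F}_G^\nabla(I\times\Omega)$ is, by definition, a gradient $G$-equivariant parametrized map over the trivial factor $I\times\R^k$, so the pair $(t,y)\in I\times\R^k$ plays the role of $y$ above. The map-level bijection therefore transports $\mathcal{F}_G^\nabla(I\times\Omega)\to\mathcal{F}_{\pi'}^\nabla(I\times M/\sim)$, and hence sends gradient otopies to gradient otopies and conversely. Thus $\Psi$ descends to a bijection on otopy classes.

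The main obstacle I anticipate is bookkeeping around the weaker regularity of $\varphi$ in the parametrized setting: the potential need not be continuous in $y$, only $C^1$ in $x$. Because of this, one cannot directly quote Theorem \ref{thm:psi} as a statement about global $C^1$ potentials on $\Omega$; instead I would formulate it as a fiberwise bijection and assemble the resulting family of local vector fields into a continuous section by using that $f=\nabla_x\varphi$ is continuous on $D_f$ by hypothesis. Once this pointwise-vs-global regularity is reconciled, the rest of the argument is a direct transcription of the nonparametrized case.
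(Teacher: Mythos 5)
The paper states this proposition without proof (it is offered only as ``an analogue of Theorem \ref{thm:psi}''), so there is nothing to compare against line by line; judged on its own, your argument is sound and is surely the intended one: slice over the trivial factor $\R^k$ to reduce to the nonparametrized correspondence $\nabla\varphi\mapsto\nabla\wt\varphi$, and obtain the otopy statement by viewing a gradient otopy as a parametrized gradient map over $I\times\R^k$ (legitimate precisely because the parametrized definition only demands a potential that is $C^1$ in $x$ for each fixed parameter). Two small points deserve care. First, what you need fiberwise is not Theorem \ref{thm:psi} as stated (a statement about otopy classes) but the underlying map-level bijection $\mathcal{F}^\nabla_G(\Omega_y)\to\mathcal{F}^\nabla(M_y)$ from \cite{BP4}; you flag this correctly in your last paragraph, and it is what makes the ``assembly'' into a continuous section of $E$ work, since $\Psi(f)$ is just the image of the continuous $f$ under the fiberwise isometries $dq_z\colon (T_z(Gz))^\perp\cap T_z\Omega_{\pi(q(z))}\to E_{q(z)}$. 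Second, the definition of gradient in the parametrized setting does not require the potential to be invariant, and ``normalizing constants fiberwise'' is awkward to carry out coherently; it is cleaner to replace $\varphi$ by its average $\int_G\varphi(gz)\,dg$, which is invariant and still satisfies $\nabla_x\bar\varphi=f$ by equivariance, after which $\wt\varphi$ and hence $\Psi$ are well defined (and independent of the choice of potential, since two invariant potentials differ by a function locally constant along fibers).
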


\begin{rem}
It is worth pointing out that for now
we do not have a satisfactory classification
of the set $\mathcal{F}_\pi^\nabla[M]$
even in the simplest case 
of trivial action, where $M=\Omega=\R^k\oplus\R^n$
(see Question 1 in \cite{BP3}).
\end{rem}

\end{document}